\providecommand{\tabularnewline}{\\}
\numberwithin{equation}{section}
\numberwithin{figure}{section}
\numberwithin{table}{section}
\theoremstyle{plain}
\newtheorem{thm}{\protect\theoremname}[section]
  \theoremstyle{remark}
  \newtheorem{rem}[thm]{\protect\remarkname}
  \theoremstyle{definition}
  \newtheorem{example}[thm]{\protect\examplename}
  \theoremstyle{plain}
  \newtheorem{prop}[thm]{\protect\propositionname}
  \theoremstyle{definition}
  \newtheorem{defn}[thm]{\protect\definitionname}
  \theoremstyle{plain}
  \newtheorem{lem}[thm]{\protect\lemmaname}
  \theoremstyle{plain}
  \newtheorem{cor}[thm]{\protect\corollaryname}
  \theoremstyle{remark}
  \newtheorem*{acknowledgement*}{\protect\acknowledgementname}
\providecommand{\MR}[1]{}
\renewcommand{\section}{%
\@startsection{section}{1}%
  \z@{.7\linespacing\@plus\linespacing}{.5\linespacing}%
  {\normalfont\scshape\centering\bfseries}}
\renewcommand{\subsection}{%
\@startsection{subsection}{2}%
  \z@{.5\linespacing\@plus.7\linespacing}{.5\linespacing}%
  {\normalfont\bfseries}}
\renewcommand{\subsubsection}{%
\@startsection{subsubsection}{2}%
  \z@{.5\linespacing\@plus.7\linespacing}{.5\linespacing}%
  {\normalfont\bfseries}}
\theoremstyle{definition}
  \providecommand{\acknowledgementname}{Acknowledgement}
  \providecommand{\corollaryname}{Corollary}
  \providecommand{\definitionname}{Definition}
  \providecommand{\examplename}{Example}
  \providecommand{\lemmaname}{Lemma}
  \providecommand{\propositionname}{Proposition}
  \providecommand{\remarkname}{Remark}
\providecommand{\theoremname}{Theorem}
\begin{document}

\title{Unbounded operators, Lie algebras, and local representations}

\author{Palle Jorgensen and Feng Tian}

\address{(Palle E.T. Jorgensen) Department of Mathematics, The University
of Iowa, Iowa City, IA 52242-1419, U.S.A. }

\email{palle-jorgensen@uiowa.edu}

\urladdr{http://www.math.uiowa.edu/\textasciitilde{}jorgen/}

\address{(Feng Tian) Department of Mathematics, Wright State University, Dayton,
OH 45435, U.S.A.}

\email{feng.tian@wright.edu}

\urladdr{http://www.wright.edu/\textasciitilde{}feng.tian/}

\subjclass[2000]{Primary 47L60, 46N30, 46N50, 42C15, 65R10, 05C50, 05C75, 31C20; Secondary
46N20, 22E70, 31A15, 58J65, 81S25}

\keywords{Unbounded operators, deficiency-indices, unbounded operators, extensions,
Hilbert space, boundary values, non-commutative geometry, Lie algebras,
Lie groups, unitary representations, locally invariant domain, Riemann
surface, harmonic analysis, Hilbert space.}

\maketitle
\pagestyle{myheadings}
\markright{}
\begin{abstract}
We prove a number of results on integrability and extendability of
Lie algebras of unbounded skew-symmetric operators with common dense
domain in Hilbert space. By integrability for a Lie algebra $\mathfrak{g}$,
we mean that there is an associated unitary representation $\mathcal{U}$
of the corresponding simply connected Lie group such that $\mathfrak{g}$
is the differential of $\mathcal{U}$. Our results extend earlier
integrability results in the literature; and are new even in the case
of a single operator. Our applications include a new invariant for
certain Riemann surfaces.
\end{abstract}
\tableofcontents{}

\section{Introduction}

In this paper we discuss the problem of integrating representations
of Lie algebras to unitary representations of the corresponding simply
connected Lie group. Our main purpose is to stress a link between
the two, taking the form of local representations. Hence we begin
with the case of one dimension, so the real line $\mathbb{R}$, in
section \ref{sec:single}, and we turn to general Lie algebras/Lie
groups in section \ref{sec:general}. The literature is vast, and
to give the reader a sense of different directions, both current and
classical, we suggest the following papers, and the sources cited
there: \cite{Jor87,Jor86,FS66,Seg64,Ben02,Pra91,Voh72,Che72,Arn78,dG84,dG83,Nee11,GKS11,GT12,Nee06,Rob90,Rob89,BGJR88,Rus87,Fro80,Nel59}.
Applications are diverse as well; physics (symmetry groups, relativistic
and non-relativistic), differential equations, harmonic analysis,
and stochastic processes.

The simplest case of the integrability problem for Lie algebras of
unbounded operators is that of a single operator. Here we will restrict
attention to skew-symmetric operators with dense domain in Hilbert
space $\mathscr{H}$. The story begins with von Neumann\textquoteright s
theory of indices, also called defect indices (or deficiency indices);
so named because they measure the gap between an operator being formally
skew-adjoint on the one hand, and skew-adjoint on the other. By the
latter, we mean that it has a spectral resolution, and therefore is
the generator of a strongly continuous one-parameter group of unitary
operators in $\mathscr{H}$. This is really a geometric formulation
of a variety of boundary value problems.

The paper is organized as follows: In section \ref{sec:single} we
present the case of a single operator. This will be used, and it also
allows us to introduce key ideas to be used later, for abelian Lie
algebras in section \ref{sec:abelian}, in the case of non-abelian
Lie algebras of unbounded operators in section \ref{sec:general}.

\section{\label{sec:single}One dimension, single operators}

In the case of a single skew-symmetric operator $T$ with dense domain
in a Hilbert space $\mathscr{H}$, we introduce a notion of \textquotedblleft local
invariance,\textquotedblright{} and we prove that it is equivalent
to $T$ having von Neumann indices $(0,0)$, i.e., to $T$ being essentially
skew-adjoint. Equivalently, indices $(0,0)$ means that $T$ has a
projection valued spectral resolution; and is therefore the infinitesimal
generator of a strongly continuous one-parameter group of unitary
operators in $\mathscr{H}$.

The systematic study of extensions of symmetric (or equivalently skew-symmetric)
operators began with von Neumann\textquoteright s paper \cite{vN30}.
Different applications, including boundary value problems and scattering
theory, see \cite{MR1009163} and \cite{MR1037774}. For more recent
applications, see for example \cite{MR3167762,MR3129890,MR3017129,MR2989487}.
\begin{thm}
\label{thm:flow1}Let $H$ be a skew-symmetric operator with dense
domain $\mathscr{D}$ in a Hilbert space $\mathscr{H}$, i.e., 
\begin{equation}
\left\langle Hv,w\right\rangle +\left\langle v,Hw\right\rangle =0\label{eq:i1}
\end{equation}
for all $v,w\in\mathscr{D}$. Suppose there are subspaces $\mathscr{D}_{\varepsilon}$,
$\varepsilon\in\mathbb{R}_{+}$, such that
\begin{enumerate}[label=(\roman{enumi}),ref=(\roman{enumi})]
\item \label{enu:i1-1}
\[
\mathscr{D}=\bigcup_{\varepsilon\in\mathbb{R}_{+}}\mathscr{D}_{\varepsilon}.\;\mbox{(Note that \ensuremath{\mathscr{D}_{\varepsilon}} may be zero if \ensuremath{\varepsilon}\ is large.)}
\]

\item \label{enu:i1-2}For every $\varepsilon\in\mathbb{R}_{+}$ there is
an $\varepsilon'$, $0<\varepsilon'<\varepsilon$, and there are operators
\[
\left\{ \varphi_{\varepsilon}\left(t\right):\left|t\right|<\varepsilon\right\} 
\]
 with dense domain $\mathscr{D}$ such that:

\begin{enumerate}
\item \label{enu:i1-2-a}$\varphi_{\varepsilon}\left(s+t\right)=\varphi_{\varepsilon}\left(s\right)\varphi_{\varepsilon}\left(t\right)$,
$\left|s\right|<\varepsilon$, $\left|t\right|<\varepsilon$, $\left|s+t\right|<\varepsilon$; 
\item \label{enu:i1-2-b}$\frac{d}{dt}\varphi_{\varepsilon}\left(t\right)=H\varphi_{\varepsilon}\left(t\right)$,
$\left|t\right|<\varepsilon'$, $\varphi_{\varepsilon}\left(0\right)=I$;
and 
\item \label{enu:i1-2-c}$\varphi_{\varepsilon}\left(t\right)$ leaves $\mathscr{D}_{\varepsilon}$
invariant for all $t\in\left(-\varepsilon',\varepsilon'\right)$. 
\end{enumerate}
\end{enumerate}
\end{thm}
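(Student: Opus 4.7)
The conclusion to aim for (in view of the preceding discussion of deficiency indices) is that $H$ is essentially skew-adjoint, equivalently that its von Neumann indices are $(0,0)$. The strategy is to assemble, from the local data $\{\varphi_\varepsilon\}$, a genuine strongly continuous one-parameter unitary group $U(t)$ on $\mathscr{H}$ whose infinitesimal generator is the closure of $H$; Stone's theorem together with a standard core argument then produces essential skew-adjointness.

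First I would fix $\varepsilon$ and observe that for $v\in\mathscr{D}_\varepsilon$ the skew-symmetry (\ref{eq:i1}) combined with (ii)(b) gives $\frac{d}{dt}\|\varphi_\varepsilon(t)v\|^{2}=2\,\mathrm{Re}\,\langle H\varphi_\varepsilon(t)v,\varphi_\varepsilon(t)v\rangle=0$ on $|t|<\varepsilon'$, so each $\varphi_\varepsilon(t)$ is isometric on $\mathscr{D}_\varepsilon$ there; the local semigroup law (ii)(a) then spreads isometry to all of $|t|<\varepsilon$. I then propagate the flow globally: for arbitrary $t\in\mathbb{R}$ pick a partition $t=t_{1}+\cdots+t_{N}$ with $|t_{k}|<\varepsilon'$ and define $U(t)v:=\varphi_\varepsilon(t_{N})\cdots\varphi_\varepsilon(t_{1})v$; invariance (ii)(c) ensures every intermediate vector lies back in $\mathscr{D}_\varepsilon$, so the composition is well defined, and chaining (ii)(a)--(ii)(b) shows $\xi(t):=U(t)v$ solves the Cauchy problem $\xi'(t)=H\xi(t)$, $\xi(0)=v$, for all real $t$.

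Skew-symmetry furnishes uniqueness for that Cauchy problem (via $\frac{d}{dt}\|\xi_{1}-\xi_{2}\|^{2}=0$), which plays a double role: it makes $U(t)v$ independent of the chosen partition, and, applied to a vector $v\in\mathscr{D}_\varepsilon\cap\mathscr{D}_\eta$, it shows the two constructions coming from different indices agree. Using (i), I obtain a single consistent family $U(t):\mathscr{D}\to\mathscr{D}$ with $U(s+t)=U(s)U(t)$, $U(0)=I$, and $\|U(t)v\|=\|v\|$; extension by density produces a strongly continuous unitary group on $\mathscr{H}$. Stone's theorem writes $U(t)=e^{tA}$ for a unique skew-adjoint $A\supseteq H$, and $U$-invariance of the dense domain $\mathscr{D}$ certifies $\mathscr{D}$ as a core for $A$, so $\overline{H}=A$ and the deficiency indices vanish.

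The principal obstacle is the middle step, in which partial locally defined pieces must be welded into a global, unambiguous flow. Invariance (ii)(c) and skew-symmetry must be used in tandem: invariance is what makes the iterated compositions even legal, and the uniqueness coming from skew-symmetry is what kills the ambiguity in partition choice and in the index $\varepsilon$. Without either ingredient, one could imagine incompatible global extensions, and the passage to a single strongly continuous unitary group would fail.
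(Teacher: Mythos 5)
Your proposal is correct, and its skeleton matches the paper's: local isometry from skew-symmetry \eqref{eq:i1} plus \ref{enu:i1-2-b}, global propagation of the flow using the invariance \ref{enu:i1-2-c}, Stone's theorem, and identification of the generator with the closure of $H$. Two steps are executed differently. For the welding step, the paper defines $U_{t}v:=\left(\varphi_{\varepsilon}\left(t/n\right)\right)^{n}v$ and settles the group law $U_{s+t}=U_{s}U_{t}$ by an open--closed--connectedness argument on $\mathbb{R}$, whereas you invoke uniqueness for the Cauchy problem $\xi'=H\xi$, valid because $\frac{d}{dt}\left\Vert \xi_{1}-\xi_{2}\right\Vert ^{2}=2\,\mathrm{Re}\left\langle H\left(\xi_{1}-\xi_{2}\right),\xi_{1}-\xi_{2}\right\rangle =0$ so long as both solutions stay in $\mathscr{D}$, which \ref{enu:i1-2-c} guarantees; your route has the merit of also settling explicitly the consistency of the definitions for $v\in\mathscr{D}_{\varepsilon}\cap\mathscr{D}_{\eta}$, a point the paper leaves implicit. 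For the final step, the paper gives a self-contained proof that the deficiency indices are $\left(0,0\right)$: a deficiency vector $f_{-}$ would force $\frac{d}{dt}\left\langle U_{t}v,f_{-}\right\rangle =\left\langle U_{t}v,f_{-}\right\rangle $, hence $\left\langle U_{t}v,f_{-}\right\rangle =e^{t}\left\langle v,f_{-}\right\rangle $, impossible for a bounded left-hand side unless $\left\langle v,f_{-}\right\rangle =0$ for all $v\in\mathscr{D}$; you instead cite the invariant-core theorem (a dense $\mathscr{D}\subseteq dom\left(A\right)$ with $U\left(t\right)\mathscr{D}\subseteq\mathscr{D}$ for all $t$ is a core for the generator $A$). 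Both are legitimate, and the paper's computation is essentially a hands-on proof of the special case of the theorem you quote, so your version is shorter but less self-contained. One small caution: your claim that the local semigroup law \ref{enu:i1-2-a} alone ``spreads isometry to all of $\left|t\right|<\varepsilon$'' also requires \ref{enu:i1-2-c}, since the intermediate vectors $\varphi_{\varepsilon}\left(t/n\right)^{k}v$ must remain in $\mathscr{D}_{\varepsilon}$ for the norm-derivative computation to apply to them; as you use invariance in the global propagation anyway, this is a wording slip rather than a gap.
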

Then the operator $H$ is essentially skew-adjoint, i.e., it has a
projection-valued spectral resolution. 
\begin{proof}
\textbf{\uline{Step 1}} For all $\varepsilon\in\mathbb{R}_{+}$,
and $v\in\mathscr{D}_{\varepsilon}$, we have 
\begin{equation}
\left\Vert \varphi_{\varepsilon}\left(t\right)v\right\Vert =\left\Vert v\right\Vert ,\;\forall t\in\left(-\varepsilon,\varepsilon\right).\label{eq:i2}
\end{equation}
Indeed, let $v\in\mathscr{D}_{\varepsilon}$ be as above; then 
\begin{eqnarray*}
\frac{d}{dt}\left\Vert \varphi_{\varepsilon}\left(t\right)v\right\Vert ^{2} & = & \left\langle \frac{d}{dt}\varphi_{\varepsilon}\left(t\right)v,\varphi_{\varepsilon}\left(t\right)v\right\rangle +\left\langle \varphi_{\varepsilon}\left(t\right)v,\frac{d}{dt}\varphi_{\varepsilon}\left(t\right)v\right\rangle \\
 & \underset{\left(\text{by }\ref{enu:i1-2-b}\right)}{=} & \left\langle H\varphi_{\varepsilon}\left(t\right)v,\varphi_{\varepsilon}\left(t\right)v\right\rangle +\left\langle \varphi_{\varepsilon}\left(t\right)v,H\varphi_{\varepsilon}\left(t\right)v\right\rangle \\
 & \underset{\left(\text{by }\left(\ref{eq:i1}\right)\right)}{=} & 0.
\end{eqnarray*}
Hence (\ref{eq:i2}) follows. 

\textbf{\uline{Step 2}} We get a strongly continuous unitary one-parameter
group $\left\{ U_{t}\:|\: t\in\mathbb{R}\right\} $ acting in $\mathscr{H}$,
by extending from the local flows $\left\{ \varphi_{\varepsilon}\left(t\right)\:|\:\left|t\right|<\varepsilon\right\} $,
as follows: 

Pick $t\in\mathbb{R}\backslash\left\{ 0\right\} $, and $n\in\mathbb{N}$
s.t. $\left|t/n\right|<\varepsilon'$, where $\varepsilon'$ is as
in \ref{enu:i1-2}. Then the operators $\varphi_{\varepsilon}\left(t/n\right)$
leave invariant the fixed subspaces $\mathscr{D}_{\varepsilon}$ from
\ref{enu:i1-1}. For $v\in\mathscr{D}_{\varepsilon}$, we then get,
using \ref{enu:i1-2-a}, 
\begin{equation}
U_{t}v:=\left(\varphi_{\varepsilon}\left(\frac{t}{n}\right)\right)^{n}v.\label{eq:il3}
\end{equation}
An application of (\ref{eq:i2}) yields, 
\begin{equation}
\left\Vert U_{t}v\right\Vert =\left\Vert v\right\Vert .\label{eq:il4}
\end{equation}
Hence, $U_{t}$ extends by closure (since \ref{enu:i1-1} holds and
$\mathscr{D}$ is dense) to a unitary operator $U_{t}:\mathscr{H}\rightarrow\mathscr{H}$
for all $t\in\mathbb{R}$. By (\ref{eq:i2}), the set of pairs in
$\mathbb{R}$ for which the group 
\begin{equation}
U_{s+t}=U_{s}U_{t}\label{eq:il5}
\end{equation}
holds is open, closed, and non-empty. Using \ref{enu:i1-2-a} and
connectedness of $\mathbb{R}$, we conclude that (\ref{eq:il5}) holds
for all $s,t\in\mathbb{R}$. 

\textbf{\uline{Step 3}} $t\longrightarrow U_{t}$ is strongly continuous:
This follows from \ref{enu:i1-2-b}.

\textbf{\uline{Step 4}} By Stone's theorem, $\left\{ U_{t}\right\} _{t\in\mathbb{R}}$
has a unique infinitesimal generator, i.e., there is a selfadjoint
operator $\widetilde{H}$ (generally unbounded) such that
\begin{equation}
U_{t}=e^{it\widetilde{H}}=\int_{\mathbb{R}}e^{i\lambda t}P\left(d\lambda\right)\label{eq:il6}
\end{equation}
where $P\left(\cdot\right)$ denotes the spectral resolution of the
selfadjoint operator $\widetilde{H}$. 

\textbf{\uline{Step 5}} The operator $i\widetilde{H}$ extends
$H$, i.e., $\mathscr{D}$ is contained in $dom(\widetilde{H})$,
and 
\begin{equation}
Hv=i\widetilde{H}v=\lim_{t\rightarrow0}\frac{U_{t}v-v}{t}\label{eq:il7}
\end{equation}
holds for all $v\in\mathscr{D}.$

Proof of (\ref{eq:il7}). Pick $v\in\mathscr{D}$, $\varepsilon>0$
s.t. $v\in\mathscr{D}_{\varepsilon}$, see \ref{enu:i1-1}; then for
$\left|t\right|<\varepsilon$, $\varphi_{\varepsilon}\left(t\right)v=U_{t}v$,
and 
\[
\frac{d}{dt}\big|_{t=0}\varphi_{\varepsilon}\left(t\right)\underset{(\text{by (ii)(b)})}{=}Hv=\lim_{t\rightarrow0}\frac{U_{t}v-v}{t}
\]
exists, and by definition of the infinitesimal generator of $\left\{ U_{t}\right\} _{t\in\mathbb{R}}$,
we get $Hv=i\widetilde{H}v$ which is the desired conclusion (\ref{eq:il7}). 

We now show that the closure of $\left(H,\mathscr{D}\right)$, the
operator $H$ with $\mathscr{D}$ as its domain, is $i\widetilde{H}$,
i.e., that infinitesimal generator of the unitary one-parameter group
$\left\{ U_{t}\right\} _{t\in\mathbb{R}}$ from (\ref{eq:il6})-(\ref{eq:il7})
is the closure of $H$.

\uline{Details:} Since we already proved that $i\widetilde{H}$
is an extension of $H$ on $\mathscr{D}$, i.e., that the closure
of the graph 
\[
G\left(H\right):=\left\{ \begin{pmatrix}v\\
Hv
\end{pmatrix},v\in\mathscr{D}\right\} 
\]
in $\mathscr{H}\times\mathscr{H}$ is contained in the graph of $i\widetilde{H}$,
and $G\left(H\right)^{cl}=G(i\widetilde{H})$; it is enough to prove
that $\left(H,\mathscr{D}\right)$ has deficiency indices $\left(0,0\right)$;
for then the skew-adjoint extension $i\widetilde{H}$ is unique, so
it must be the closure of $H$.

To verify the index assertion, we must show that the following two
equations,
\begin{equation}
\left\langle Hv\pm v,f_{\pm}\right\rangle \equiv0,\;\forall v\in\mathscr{D}\label{eq:il8}
\end{equation}
has only solution $f_{\pm}=0$ in $\mathscr{H}$. By symmetry, we
need only consider one of the equations in (\ref{eq:il8}), say the
$f_{-}$ case: 

For $v\in\mathscr{D}=\bigcup_{\varepsilon\in\mathbb{R}_{+}}\mathscr{D}_{\varepsilon}$,
we pick $\varepsilon$ and $\varepsilon'$ as in \ref{enu:i1-2-c}
in the statement of the theorem. Using formula (\ref{eq:il3}), we
conclude that $U_{t}v\in\mathscr{D}$ for all $t\in\mathbb{R}$. Hence,
by (\ref{eq:il8}), we get 
\[
\left\langle HU_{t}v,f_{-}\right\rangle =\left\langle U_{t}v,f_{-}\right\rangle ,\;\forall t\in\mathbb{R}.
\]
But by (\ref{eq:il7}) this is equivalent to the following differential
equation:
\[
\frac{d}{dt}\left\langle U_{t}v,f_{-}\right\rangle =\left\langle U_{t}v,f_{-}\right\rangle ,\;\forall t\in\mathbb{R},
\]
with initial condition $\left\langle U_{t}v,f_{-}\right\rangle \Big|_{t=0}=\left\langle v,f_{-}\right\rangle $.
Hence 
\begin{equation}
\left\langle U_{t}v,f_{-}\right\rangle =e^{t}\left\langle v,f_{-}\right\rangle ,\;\forall t\in\mathbb{R}.\label{eq:il9}
\end{equation}
But since $U_{t}$ is unitary (isometric) the LHS in (\ref{eq:il9})
is bounded as a function of $t\in\mathbb{R}$, while the RHS in (\ref{eq:il9})
is always unbounded when $\left\langle v,f_{-}\right\rangle \neq0$.
We conclude therefore that $\left\langle v,f_{-}\right\rangle =0$
for all $v\in\mathscr{D}$. But, by condition \ref{enu:i1-1} in the
theorem, $\mathscr{D}$ is dense in $\mathscr{H}$, and so $f_{-}=0$. \end{proof}
\begin{rem}
The converse to the implication in Theorem \ref{thm:flow1} holds;
in fact a slightly stronger version holds. I.e., it holds that every
skew-adjoint operator $i\widetilde{H}$ $(\widetilde{H}^{*}=\widetilde{H})$
in a Hilbert space $\mathscr{H}$ admits dense subspaces $\mathscr{D}$
contained in $dom(i\widetilde{H})$ which satisfy the conditions \ref{enu:i1-1}-\ref{enu:i1-2}
from the statement of Theorem \ref{thm:flow1}.

In fact, there are many such choices for $\mathscr{D}$; and it becomes
more of a question of identifying choices that are useful in applications.
Below we sketch a choice of dense subspace $\mathscr{D}$ (subject
to \ref{enu:i1-1}-\ref{enu:i1-2}) for a given skew-adjoint operator
$i\widetilde{H}$ in $\mathscr{H}$, where $\mathscr{H}$ is a fixed
Hilbert space.

Given $i\widetilde{H}$, we get a projection valued measure $P\left(\cdot\right)$
as in (\ref{eq:il6}); i.e., $P\left(\cdot\right)$ is a sigma-additive
function defined on $\mathscr{B}\left(\mathbb{R}\right)$, Borel sets,
such that 
\begin{gather*}
P\left(\mathbb{R}\right)=I_{\mathscr{H}}\\
P\left(A\right)=P\left(A\right)^{*}=P\left(A\right)^{2},\;\forall A\in\mathscr{B}\left(\mathbb{R}\right)\\
P\left(A\cap B\right)=P\left(A\right)P\left(B\right),\;\forall A,B\in\mathscr{B}\left(\mathbb{R}\right)
\end{gather*}
and s.t. (\ref{eq:il6}) holds. 

We then set, for all $\varepsilon\in\mathbb{R}_{+}$, $\mathscr{D}_{\varepsilon}:=P\left(-\varepsilon^{-1},\varepsilon^{-1}\right)\mathscr{H}\subset\mathscr{H}$,
and $\mathscr{D}:=\bigcup_{\varepsilon\in\mathbb{R}_{+}}\mathscr{D}_{\varepsilon}$.
It then follows from basic spectral theory that this $\mathscr{D}$
satisfies the conditions from \ref{enu:i1-1}-\ref{enu:i1-2} in the
statement of Theorem \ref{thm:flow1}.\end{rem}
\begin{example}
Let $\mathscr{H}=L^{2}(0,1)$, and $\mathscr{D}=C_{c}^{1}\left(0,1\right)=$
compactly supported $C^{1}$ functions in $\left(0,1\right)$. And
$H=d/dx$, skew-symmetric on $\mathscr{D}$. If $v$ is in $\mathscr{D}$
we can exponentiate locally as 
\[
\varphi\left(t\right)v\left(\cdot\right):=v\left(\cdot-t\right)
\]
as long as we don't translate out of $(0,1)$, but $H$ is not essentially
skew-adjoint; it has deficiency indices $(1,1)$. Note that $\varphi\left(t\right)$
does not satisfy the local invariance condition. Reason: The support
of $v\left(\cdot-t\right)$ gets closer to a boundary point as $\left|t\right|\neq0$.

To see that $H$ above with $\mathscr{D}=dom\left(H\right)=C_{c}^{\infty}\left(0,1\right)$
has von Neumann indices $\left(1,1\right)$, note that each of the
two equations 
\[
H^{*}f_{\pm}=\mp f_{\pm}
\]
has non-zero solution, i.e., $f_{\pm}\in L^{2}\left(0,1\right)$,
$f_{\pm}\in dom\left(H^{*}\right)$. The solutions are 
\[
f_{\pm}\left(x\right)=\mbox{const}\cdot e^{\pm x},\; x\in\left(0,1\right).
\]

\end{example}

\section{\label{sec:abelian}Commuting skew-symmetric operators with common
dense domain}

Here we study the case of integrability and extendability of abelian
Lie algebras of unbounded skew-symmetric operators with common dense
domain in Hilbert space.
\begin{thm}
\label{thm:flow2}Fix $n\in\mathbb{N}$, and set $J_{n}:=\left\{ 1,\ldots,n\right\} $.
Let $H_{i}$, $i\in J$, be a set of skew-symmetric operators with
a common dense domain $\mathscr{D}$ in a Hilbert space $\mathscr{H}$,
i.e., 
\begin{equation}
\left\langle H_{j}v,w\right\rangle +\left\langle v,H_{j}w\right\rangle =0\label{eq:m1}
\end{equation}
for all $v,w\in\mathscr{D}$, $j\in J_{n}$. Suppose there are subspaces
$\mathscr{D}_{\varepsilon}$, $\varepsilon\in\mathbb{R}_{+}$, such
that
\begin{enumerate}[label=(\roman{enumi}'),ref=(\roman{enumi}')]
\item \label{enu:m-1}
\[
\mathscr{D}=\bigcup_{\varepsilon\in\mathbb{R}_{+}}\mathscr{D}_{\varepsilon}.
\]

\item \label{enu:m-2}For every $\varepsilon\in\mathbb{R}_{+}$ there is
an $\varepsilon'$, $0<\varepsilon'<\varepsilon$, and there are operators
\[
\left\{ \varphi_{\varepsilon,j}\left(t\right):\left|t\right|<\varepsilon\right\} 
\]
 with dense domain $\mathscr{D}$ such that: \\
For all $j\in J_{n}$, 

\begin{enumerate}
\item \label{enu:m-1a}$\varphi_{\varepsilon,j}\left(s+t\right)=\varphi_{\varepsilon,j}\left(s\right)\varphi_{\varepsilon,j}\left(t\right)$,
$\left|s\right|<\varepsilon$, $\left|t\right|<\varepsilon$, $\left|s+t\right|<\varepsilon$; 
\item \label{enu:m-1b}$\frac{d}{dt}\varphi_{\varepsilon,j}\left(t\right)=H_{j}\varphi_{\varepsilon,j}\left(t\right)$,
$\left|t\right|<\varepsilon'$, $\varphi_{\varepsilon,j}\left(0\right)=I$;
\item \label{enu:m-1c}$\varphi_{\varepsilon,j}\left(t\right)$ leaves $\mathscr{D}_{\varepsilon}$
invariant for all $t\in\left(-\varepsilon',\varepsilon'\right)$;
and 
\item \label{enu:m-1d} 
\[
\varphi_{\varepsilon,j}\left(s\right)\varphi_{\varepsilon,j'}\left(t\right)=\varphi_{\varepsilon,j'}\left(t\right)\varphi_{\varepsilon,j}\left(s\right)
\]
for all $j,j'\in J_{n}$, $\left|s\right|,\left|t\right|<\varepsilon$.
\end{enumerate}
\end{enumerate}
\end{thm}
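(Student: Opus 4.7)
The natural plan is to reduce to the single-operator Theorem~\ref{thm:flow1} for each coordinate, and then use the local commutation hypothesis \ref{enu:m-1d} to upgrade the resulting one-parameter unitary groups to a unitary representation of $\mathbb{R}^{n}$. The expected conclusion (in the spirit of Theorem~\ref{thm:flow1}) is that each $H_{j}$ is essentially skew-adjoint, that the closures $i\widetilde{H}_{1},\dots,i\widetilde{H}_{n}$ mutually strongly commute, and that the assignment
\[
U(t_{1},\dots,t_{n}) \;:=\; e^{it_{1}\widetilde{H}_{1}}\cdots e^{it_{n}\widetilde{H}_{n}}
\]
defines a strongly continuous unitary representation of $\mathbb{R}^{n}$ on $\mathscr{H}$ whose differential on $\mathscr{D}$ recovers $H_{1},\dots,H_{n}$.

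\textbf{Step 1 (apply Theorem~\ref{thm:flow1} coordinatewise).} For each fixed $j\in J_{n}$, the family $\{\varphi_{\varepsilon,j}(t)\}$ together with the subspaces $\mathscr{D}_{\varepsilon}$ satisfies exactly the hypotheses \ref{enu:i1-1}--\ref{enu:i1-2} of Theorem~\ref{thm:flow1} (condition \ref{enu:m-1d} is irrelevant at this stage). Hence $H_{j}$ is essentially skew-adjoint with closure $i\widetilde{H}_{j}$, and one obtains a strongly continuous unitary one-parameter group $U^{(j)}_{t}=\exp(it\widetilde{H}_{j})$ on $\mathscr{H}$. Moreover, Step~2 of the proof of Theorem~\ref{thm:flow1} gives the explicit local formula $U^{(j)}_{t}v=\bigl(\varphi_{\varepsilon,j}(t/n)\bigr)^{n}v$ for $v\in\mathscr{D}_{\varepsilon}$, once $n$ is chosen so that $|t/n|<\varepsilon'$.

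\textbf{Step 2 (from local commutation to commutation of the unitary groups).} This is where \ref{enu:m-1d} enters, and I expect it to be the main technical point. Fix $v\in\mathscr{D}_{\varepsilon}$ and $s,t\in\mathbb{R}$; choose $n\in\mathbb{N}$ large enough that $|s/n|,|t/n|<\varepsilon'$. By \ref{enu:m-1c}, each iterate $\varphi_{\varepsilon,j'}(t/n)^{k}v$ lies in $\mathscr{D}_{\varepsilon}$, so \ref{enu:m-1d} can be applied repeatedly to obtain
\[
\bigl(\varphi_{\varepsilon,j}(s/n)\bigr)^{n}\bigl(\varphi_{\varepsilon,j'}(t/n)\bigr)^{n}v
\;=\;\bigl(\varphi_{\varepsilon,j'}(t/n)\bigr)^{n}\bigl(\varphi_{\varepsilon,j}(s/n)\bigr)^{n}v.
\]
By the formula of Step~1 this reads $U^{(j)}_{s}U^{(j')}_{t}v=U^{(j')}_{t}U^{(j)}_{s}v$ on the dense subspace $\mathscr{D}=\bigcup_{\varepsilon}\mathscr{D}_{\varepsilon}$; by boundedness of the $U^{(j)}_{s}$ it extends to all of $\mathscr{H}$. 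Thus $U^{(j)}_{s}U^{(j')}_{t}=U^{(j')}_{t}U^{(j)}_{s}$ for all $s,t\in\mathbb{R}$ and all $j,j'\in J_{n}$, which by Stone's theorem is equivalent to the strong commutation of the spectral projections of $\widetilde{H}_{j}$ and $\widetilde{H}_{j'}$.

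\textbf{Step 3 (assembling the representation of $\mathbb{R}^{n}$).} Since the $U^{(j)}$ mutually commute, $U(t_{1},\dots,t_{n}):=U^{(1)}_{t_{1}}\cdots U^{(n)}_{t_{n}}$ is a strongly continuous unitary representation of $\mathbb{R}^{n}$. That each $iH_{j}|_{\mathscr{D}}$ is the restriction of the $j$-th partial infinitesimal generator follows exactly as in Step~5 of the proof of Theorem~\ref{thm:flow1}. The main obstacle, as indicated, is Step~2: the hypothesis \ref{enu:m-1d} only asserts commutation of the local flows on a small neighbourhood of $0$, so one must be careful to stay inside the invariant subspaces $\mathscr{D}_{\varepsilon}$ (using \ref{enu:m-1c}) while iterating, and then pass to the limit via density — essentially the same three-ingredient recipe (local invariance, product law, density) used in the single-variable theorem.
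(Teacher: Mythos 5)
Your proposal is correct and takes essentially the same route as the paper: the paper likewise runs Steps 1--5 of the proof of Theorem \ref{thm:flow1} for each $H_{j}$ separately, uses \ref{enu:m-1d} to conclude that the locally defined groups $U_{j}\left(t\right)$ commute on $\mathscr{D}$, and extends by density to commuting unitary one-parameter groups with generators $i\widetilde{H_{j}}=\overline{H_{j}}$. The only difference is cosmetic: where you derive strong commutation directly from the standard equivalence between commuting unitary groups and commuting spectral measures, the paper instead cites \cite[Lemma 1]{Jor76} (with proof in \cite{JM84}) to phrase the conclusion as integrability of the abelian Lie algebra representation of $\left(\mathbb{R}^{n},+\right)$; your explicit iteration through the invariant subspaces $\mathscr{D}_{\varepsilon}$ in Step 2 in fact supplies a detail the paper leaves implicit.
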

Then the operators $H_{j}$ are essentially skew-adjoint, and the
operator closures $\overline{H_{j}}$ strongly commute, i.e., the
operators $\overline{H_{j}}$ have commuting spectral projections.
\begin{proof}
Note steps 1-5 in the proof of Theorem \ref{thm:flow1} carry over
for each $H_{j}$, $j\in J_{n}$. 

Indeed, for each $j\in J_{n}$, define $U_{j}\left(t\right)$ as in
(\ref{eq:il3}), acting on $\mathscr{D}$. Then $U_{j}\left(t\right)$
is norm-preserving (see (\ref{eq:il4})), and by \ref{enu:m-1d},
we have
\[
U_{j}\left(t\right)U_{j'}\left(s\right)=U_{j'}\left(s\right)U_{j}\left(t\right)\;\mbox{on }\mathscr{D},\;\forall j,j'\in J_{n},\: t,s\in\mathbb{R}.
\]
Therefore, the operators $U_{j}$ extend by continuity to a family
of commuting unitary one-parameter groups in $\mathscr{H}$. By Stone's
theorem, 
\[
U_{j}\left(t\right)=e^{it\widetilde{H_{j}}}=\int e^{it\lambda}P_{j}\left(d\lambda\right),\; t\in\mathbb{R}
\]
where $\widetilde{H_{j}}$ is the corresponding selfadjoint infinitesimal
generator, and $P_{j}$ the projection valued measure. Moreover, $H_{j}\subset i\widetilde{H_{j}}$
and $\overline{H_{j}}=i\widetilde{H_{j}}$, i.e., $H_{j}$ is essentially
selfadjoint. 

For finish the proof, we recall a general theorem in the theory of
integrable representations of $*$-algebras. See \cite[Lemma 1]{Jor76}
and a complete proof in \cite{JM84}. 

Details: Note \ref{enu:m-1}-\ref{enu:m-2} define a representation
$\rho$ of an $n$-dimensional abelian Lie algebra $\mathfrak{g}\left(=\mathbb{R}^{n}\right)$
acting on the common dense domain $\mathscr{D}$ in $\mathscr{H}$.
By \cite[Lemma 1]{Jor76}, the local invariance condition and the
density of $\mathscr{D}$ imply that $\rho$ can be exponentiated
(i.e., $\rho$ is integrable) to a unitary representation $\mathcal{U}$
of the Lie group $G=\left(\mathbb{R}^{n},+\right)$, and $\rho=d\mathcal{U}$.
Therefore, the generators $i\widetilde{H_{j}}$ strongly commute.
\end{proof}
Combining the ideas above, we get the following result for commuting
operators. We state it for $n=2$, but the conclusions hold \emph{mutatis
mutandis} for the case $n>2$ as well.
\begin{prop}
Let $H_{1}$ and $H_{2}$ be two skew-symmetric operators defined
on a common dense domain $\mathscr{D}$ in a Hilbert space $\mathscr{H}$.
Assume that $H_{j}\mathscr{D}\subseteq\mathscr{D}$, $j=1,2$. Then
the following conditions are equivalent:
\begin{enumerate}
\item the operator $L:=H_{1}^{2}+H_{2}^{2}$ is essentially selfadjoint
on $\mathscr{D}$; 
\item each operator $H_{j}$ is essentially skew-adjoint and the two unitary
one-parameter groups $U_{j}\left(t\right):=e^{t\overline{H_{j}}}$,
$j=1,2$ are commuting;
\item each operator $H_{j}$ is essentially skew-adjoint and 
\[
U\left(t_{1},t_{2}\right):=e^{t_{1}\overline{H_{1}}}e^{t_{2}\overline{H_{2}}},\;\left(t_{1},t_{2}\right)\in\mathbb{R}^{2}
\]
 defines a strongly continuous unitary representation of $\left(\mathbb{R}^{2},+\right)$
acting on $\mathscr{H}$; 
\item for each $j$ and $\lambda\in\mathbb{C}\backslash i\mathbb{R}$, the
operator ranges $\left(\lambda-H_{j}\right)\mathscr{D}$ are dense
in $\mathscr{H}$, and the bounded operators 
\[
\left(\lambda_{1}-H_{1}\right)^{-1}\;\mbox{and }\;\left(\lambda_{2}-H_{2}\right)^{-1}
\]
are commuting, $\forall\lambda_{j}\in\mathbb{C}\backslash i\mathbb{R}$;
and
\item the conditions \ref{enu:m-1} and \ref{enu:m-2} in Theorem \ref{thm:flow2}
hold.
\end{enumerate}
\end{prop}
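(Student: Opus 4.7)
The plan is to place condition (2) at the center and derive each of the other four conditions from it. I would first dispense with the formal equivalences $(2)\Leftrightarrow(3)$ and $(2)\Leftrightarrow(4)$. The equivalence $(2)\Leftrightarrow(3)$ is immediate from Stone's theorem: commuting $U_j$'s give a strongly continuous unitary representation of $(\mathbb{R}^{2},+)$, while restricting any such representation to the coordinate axes gives back commuting one-parameter groups. For $(2)\Leftrightarrow(4)$ I would invoke von Neumann's deficiency-index criterion (density of $(\lambda\pm H_j)\mathscr{D}$ is exactly essential skew-adjointness) together with the standard fact that two bounded normal resolvents $(\lambda_j-\overline{H_j})^{-1}$ commute iff the spectral projections of the generators commute iff the unitary groups $U_j(t)$ commute.

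For $(5)\Leftrightarrow(2)$, the direction $(5)\Rightarrow(2)$ is a direct application of Theorem \ref{thm:flow2} with $n=2$, since the flow-commutation condition \ref{enu:m-1d} is precisely what forces strong commutation of the closures $\overline{H_j}$. For the reverse $(2)\Rightarrow(5)$ I would follow the construction sketched in the remark after Theorem \ref{thm:flow1}: from the joint projection-valued measure $E$ on $\mathbb{R}^{2}$ associated to $U(t_{1},t_{2})=U_{1}(t_{1})U_{2}(t_{2})$, set
\[
\mathscr{D}_{\varepsilon}:=E\bigl(\{\lambda\in\mathbb{R}^{2}:|\lambda_{j}|<\varepsilon^{-1},\ j=1,2\}\bigr)\mathscr{H}
\]
and $\varphi_{\varepsilon,j}(t):=U_{j}(t)|_{\mathscr{D}_{\varepsilon}}$; conditions \ref{enu:m-1}--\ref{enu:m-2} then follow by routine spectral-theoretic computation (the $\mathscr{D}_{\varepsilon}$ are mutually invariant under both flows, consist of entire analytic vectors for both generators, and exhaust $\mathscr{H}$ as $\varepsilon\to 0$).

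The main obstacle is $(1)\Leftrightarrow(2)$, which is a form of Nelson's commutativity theorem. The easy direction $(2)\Rightarrow(1)$ uses joint spectral calculus: $\overline{H_1}^{2}+\overline{H_2}^{2}$ is selfadjoint, and the cutoff subspaces $\mathscr{D}_{\varepsilon}$ from the previous step sit inside $\mathscr{D}\cap\mathrm{dom}(\overline{H_1}^{2}+\overline{H_2}^{2})$ and form a core, which forces $L$ to be essentially selfadjoint on $\mathscr{D}$. The real work lies in $(1)\Rightarrow(2)$. Starting from the positivity identity $\|H_1 v\|^{2}+\|H_2 v\|^{2}=\langle -Lv,v\rangle$, which follows from skew-symmetry together with the invariance $H_j\mathscr{D}\subseteq\mathscr{D}$, I would iterate a Schwarz-type bound to produce estimates of the form $\|H_j^{k}v\|\le C_k\|L^{k}v\|^{1/2}\|v\|^{1/2}$ (up to lower-order terms), thereby transferring analytic vectors of $\overline{L}$ into common analytic vectors for $H_1$ and $H_2$. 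Nelson's analytic-vector theorem then yields essential skew-adjointness of each $H_j$, and a power-series comparison of $e^{s\overline{H_1}}e^{t\overline{H_2}}$ with $e^{t\overline{H_2}}e^{s\overline{H_1}}$ on the common analytic domain delivers strong commutation. In practice I plan to invoke \cite[Lemma 1]{Jor76}, already cited in the proof of Theorem \ref{thm:flow2}, which packages exactly this analytic-vector integrability argument in the abelian case.
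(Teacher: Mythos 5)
Your overall architecture---placing (2) at the hub, using Theorem \ref{thm:flow2} for $(5)\Rightarrow(2)$, the spectral construction from the remark after Theorem \ref{thm:flow1} for $(2)\Rightarrow(5)$, Nelson \cite{Nel59} for $(1)\Leftrightarrow(2)$, and routine arguments for (3) and (4)---is exactly the assembly the paper intends (the paper prints no proof, saying only that the result follows by ``combining the ideas above''), and your handling of $(2)\Leftrightarrow(3)$ and $(2)\Leftrightarrow(4)$ is fine. The genuine gap is your direction $(2)\Rightarrow(1)$. The joint spectral cutoffs $E\bigl(\{|\lambda_{j}|<\varepsilon^{-1}\}\bigr)\mathscr{H}$ are indeed a core for the selfadjoint operator $\overline{H_{1}}^{2}+\overline{H_{2}}^{2}$, but they are determined by the closures alone and nothing places them inside the given $\mathscr{D}$; so they cannot certify essential selfadjointness of $L$ on $\mathscr{D}$, and in fact the implication is false for a general invariant domain. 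Take $\mathscr{D}=C_{c}^{\infty}(\mathbb{R}^{2}\setminus\{(0,0)\})$ and $H_{j}=\partial/\partial x_{j}$: each $H_{j}$ is essentially skew-adjoint (a point has zero capacity in the plane, so a logarithmic cutoff gives graph-norm density in the domain of the translation generator), the two translation groups commute, and $H_{1}H_{2}v=H_{2}H_{1}v$ on $\mathscr{D}$; yet $\Delta\big|_{C_{c}^{\infty}(\mathbb{R}^{2}\setminus\{0\})}$ has deficiency indices $(1,1)$. This is precisely the paper's own remark contrasting $L$ (indices $(0,0)$ on $\mathscr{D}_{\infty}$) with $L'$ (indices $(1,1)$ on $C_{c}^{\infty}(M)$) for the punctured plane. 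So $(2)\Rightarrow(1)$ cannot be proved as stated; the equivalence with (1) requires $\mathscr{D}$ to be large enough (e.g.\ the $C^{\infty}$-vector domain $\mathscr{D}_{\infty}$ of (\ref{eq:mdom}), which is the setting in which the paper later invokes Nelson's equivalence), and any correct write-up must either add that hypothesis or weaken (1).

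Two further repairs are needed even in the true directions. First, $(1)\Rightarrow(2)$ is Nelson's commutativity theorem, whose hypotheses include the pointwise commutation $H_{1}H_{2}v=H_{2}H_{1}v$ for $v\in\mathscr{D}$; the proposition never lists this (it is tacit from the section's abelian setting), and your power-series comparison of $e^{s\overline{H_{1}}}e^{t\overline{H_{2}}}$ with $e^{t\overline{H_{2}}}e^{s\overline{H_{1}}}$ silently uses it---as does your iterated estimate $\|H_{j}^{k}v\|\le C_{k}\|L^{k}v\|^{1/2}\|v\|^{1/2}$, which needs commutators to reorder $H_{j}$ past $L$. Without it the claim fails outright: $H_{1}=d/dx$ and $H_{2}=$ multiplication by $ix$ on the Schwartz space give $L=d^{2}/dx^{2}-x^{2}$ essentially selfadjoint (the harmonic oscillator), while the two groups satisfy the Weyl relations and do not commute. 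State the hypothesis explicitly. Second, in $(2)\Rightarrow(5)$ the conditions \ref{enu:m-1}--\ref{enu:m-2} are phrased for the \emph{given} $\mathscr{D}$: \ref{enu:m-1} demands $\mathscr{D}=\bigcup_{\varepsilon}\mathscr{D}_{\varepsilon}$, whereas your $\mathscr{D}_{\varepsilon}=E(\cdot)\mathscr{H}$ need not meet $\mathscr{D}$ at all. What your construction proves is the existence of \emph{some} dense locally invariant domain---exactly the ``slightly stronger version'' of the converse carefully hedged in the remark after Theorem \ref{thm:flow1}---and that existential reading is the only one under which the loop closes: in the punctured-plane example, iterating the local invariance \ref{enu:m-1c} would translate a fixed test function across the deleted point, so no nonzero family $\mathscr{D}_{\varepsilon}\subseteq C_{c}^{\infty}(\mathbb{R}^{2}\setminus\{0\})$ can satisfy \ref{enu:m-1}--\ref{enu:m-2}, even though (2) holds. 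With the commutativity hypothesis added, with (5) read existentially, and with (1) either restricted to $\mathscr{D}_{\infty}$ or demoted to a one-way implication $(1)\Rightarrow(2)$, your outline is sound.
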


\section{\label{sec:general}Lie algebras, and local representations of Lie
groups}

We begin with rigorous definitions of the following two notions: integrability
and extendability for Lie algebras $\mathfrak{g}$ of unbounded skew-symmetric
operators with common dense domain in Hilbert space. In our main result,
Corollary \ref{cor:inv}, we show that a given finite-dimensional
Lie algebra $\mathfrak{g}$ of skew-symmetric operators is integrable
to a unitary representation of the corresponding simply connected
Lie group if and only if it has a dense and locally invariant domain.
\begin{defn}
Let $\mathscr{H}$ be a Hilbert space, and $\mathscr{D}$ a dense
subspace. Let $\mathfrak{g}$ be a finite dimensional Lie algebra
over $\mathbb{R}$. Let $Sk\left(\mathscr{D}\right)$ denote the real
Lie algebra of all linear operators $X$ satisfying
\begin{enumerate}[label=(\roman{enumi})]
\item $\mathscr{D}\subset dom\left(X\right)$, for all $X\in\mathfrak{g}$;
\item $X\left(\mathscr{D}\right)\subset\mathscr{D}$;
\item $\left\langle Xu,w\right\rangle +\left\langle u,Xw\right\rangle =0$,
for all $X\in\mathfrak{g}$, and $u,w\in\mathscr{D}$. 
\end{enumerate}

A representation $\rho$ of $\mathfrak{g}$ is a Lie-homomorphism
$\rho:\mathfrak{g}\rightarrow Sk\left(\mathscr{D}\right)$, i.e.,
\begin{equation}
\rho\left(\left[x,y\right]\right)=\left[\rho\left(x\right),\rho\left(y\right)\right],\;\forall x,y\in\mathfrak{g}.\label{eq:r1}
\end{equation}
Occasionally, we shall use the notation $X=\rho\left(x\right)$, $x\in\mathfrak{g}$.

\end{defn}

\begin{defn}
\label{def:ext}We say that a representation $\left(\rho,\mathfrak{g},\mathscr{D}\right)$
has an integrable extension iff (Def) there is a unitary representation
$\mathcal{U}$ of the simply connected Lie group $G$ with $\mathfrak{g}$
as its Lie algebra, s.t. 
\begin{equation}
\rho\left(x\right)\subseteq d\mathcal{U}\left(x\right),\;\forall x\in\mathfrak{g},\label{eq:r2}
\end{equation}
where the containment ``$\subseteq$'' in (\ref{eq:r2}) refers
to containment of graphs, i.e.,
\begin{equation}
\begin{split}\mathscr{D}\subseteq dom\left(d\mathcal{U}\left(x\right)\right),\;\forall x\in\mathfrak{g},\:\mbox{and}\\
\rho\left(x\right)w=d\mathcal{U}\left(x\right)w,\;\forall w\in\mathscr{D}.
\end{split}
\label{eq:r3}
\end{equation}

\end{defn}

\begin{defn}
\label{def:int}We say that a Lie algebra representation $\left(\rho,\mathfrak{g},\mathscr{D}\right)$
is integrable if (\ref{eq:r2}) holds, but with equality for the closure,
i.e.,
\begin{equation}
\begin{split}\mbox{Graph}\left(\rho\left(x\right)\right)^{\mathscr{H}\times\mathscr{H}\:\mbox{closure}}=\mbox{Graph}\left(d\mathcal{U}\left(x\right)\right),\;\mbox{where}\\
d\mathcal{U}\left(x\right)w=\lim_{t\rightarrow0}\frac{\mathcal{U}\left(\exp tx\right)w-w}{t}
\end{split}
\label{eq:r4}
\end{equation}
 \end{defn}
\begin{lem}[\cite{JM84}]
Let $G$ be a Lie group with Lie algebra $\mathfrak{g}$, and exponential
mapping $\mathfrak{g}\xrightarrow{\exp}G$, and let $\mathcal{U}$
be a unitary representation of $G$ acting on a Hilbert space $\mathscr{H}$.
Set 
\begin{equation}
\mathscr{H}_{\infty}:=\left\{ w\in\mathscr{H}\:\big|\:\left(G\ni g\rightarrow\mathcal{U}\left(g\right)w\right)\in C^{\infty}\left(G,\mathscr{H}\right)\right\} ,\label{eq:r5}
\end{equation}
the $C^{\infty}$-vectors of $\mathcal{U}$; then
\begin{equation}
d\mathcal{U}\left(x\right)w=\lim_{t\rightarrow0}\frac{\mathcal{U}\left(\exp tx\right)w-w}{t}\label{eq:r6}
\end{equation}
is well defined for all $w\in\mathscr{H}_{\infty}$, $x\in\mathfrak{g}$. 

Moreover, $d\mathcal{U}\left(x\right)$ on $\mathscr{H}_{\infty}$
is essentially skew-adjoint; i.e., 
\[
\left(d\mathcal{U}\left(x\right)\big|{}_{\mathscr{H}_{\infty}}\right)^{*}=-d\mathcal{U}\left(x\right),\;\forall x\in\mathfrak{g}.
\]

\end{lem}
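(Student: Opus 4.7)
The strategy is to carry out three checks: (a) well-definedness of $d\mathcal{U}(x)$ on $\mathscr{H}_{\infty}$ together with $\mathscr{H}_{\infty}$-invariance, (b) skew-symmetry of $d\mathcal{U}(x)|_{\mathscr{H}_{\infty}}$, and (c) essential skew-adjointness via an invariant-domain argument that mirrors Step 5 in the proof of Theorem \ref{thm:flow1}.

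For (a), I would fix $x\in\mathfrak{g}$, $w\in\mathscr{H}_{\infty}$, and note that by definition of $\mathscr{H}_{\infty}$ the orbit map $G\ni g\mapsto\mathcal{U}(g)w$ lies in $C^{\infty}(G,\mathscr{H})$; pulling back along $t\mapsto\exp tx$ then gives a smooth $\mathscr{H}$-valued curve whose derivative at $t=0$ is precisely the limit (\ref{eq:r6}). That $d\mathcal{U}(x)w$ again lies in $\mathscr{H}_{\infty}$ follows from
\[
\mathcal{U}(g)\,d\mathcal{U}(x)w=\frac{d}{dt}\bigg|_{t=0}\mathcal{U}(g\exp tx)w,
\]
combined with smoothness of $(g,t)\mapsto\mathcal{U}(g\exp tx)w$ in $g$. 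Density of $\mathscr{H}_{\infty}$ in $\mathscr{H}$ is a classical G{\aa}rding argument: convolutions $\int_{G}\phi_{n}(g)\mathcal{U}(g)v\,dg$ with an approximate identity $\phi_{n}\in C_{c}^{\infty}(G)$ yield smooth vectors converging to any $v\in\mathscr{H}$.

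For (b), using unitarity I would differentiate
\[
\langle\mathcal{U}(\exp tx)u,w\rangle=\langle u,\mathcal{U}(\exp(-tx))w\rangle
\]
at $t=0$ for $u,w\in\mathscr{H}_{\infty}$; this calculation is legitimate on smooth vectors and yields $\langle d\mathcal{U}(x)u,w\rangle+\langle u,d\mathcal{U}(x)w\rangle=0$, so $d\mathcal{U}(x)|_{\mathscr{H}_{\infty}}$ is skew-symmetric.

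For (c), let $V_{t}:=\mathcal{U}(\exp tx)$, with Stone generator $A$ (so $iA$ is selfadjoint). Step (a) shows $d\mathcal{U}(x)|_{\mathscr{H}_{\infty}}\subseteq A$ and that $\mathscr{H}_{\infty}$ is $V_{t}$-invariant for every $t\in\mathbb{R}$. The hypotheses of Step 5 in the proof of Theorem \ref{thm:flow1} are then in force verbatim: any $f_{\pm}$ solving the deficiency equation $\langle d\mathcal{U}(x)v\pm v,f_{\pm}\rangle=0$, $v\in\mathscr{H}_{\infty}$, forces $t\mapsto\langle V_{t}v,f_{\pm}\rangle$ to satisfy an ODE whose solutions are proportional to $e^{\pm t}$, incompatible with the boundedness of the bracket in $t$ unless $\langle v,f_{\pm}\rangle=0$; density of $\mathscr{H}_{\infty}$ then gives $f_{\pm}=0$. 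Hence the deficiency indices of $d\mathcal{U}(x)|_{\mathscr{H}_{\infty}}$ are $(0,0)$, its closure equals $A$, and
\[
\bigl(d\mathcal{U}(x)|_{\mathscr{H}_{\infty}}\bigr)^{*}=A^{*}=-A=-d\mathcal{U}(x),
\]
which is essential skew-adjointness. The only substantive outside input is G{\aa}rding's density theorem; everything else is invariance bookkeeping together with reuse of the deficiency-index machinery from the proof of Theorem \ref{thm:flow1}.
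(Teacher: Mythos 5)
Your proof is correct, but there is nothing in the paper to match it against: the lemma is stated with the citation \cite{JM84} and no proof is given in the text (only a note recalling the definition of the domain of an adjoint). You have therefore supplied the standard argument from the literature, and done so accurately. Your decomposition --- (a) smoothness of the orbit map gives existence of the derivative in (\ref{eq:r6}) together with invariance of $\mathscr{H}_{\infty}$ under both $d\mathcal{U}(x)$ and each $V_{t}=\mathcal{U}(\exp tx)$, (b) differentiating the unitarity identity gives skew-symmetry, (c) an invariant dense domain is a core for the Stone generator $A$ --- is the classical route, with (c) being the well-known lemma that a dense, $V_{t}$-invariant subspace of $\mathrm{dom}(A)$ is a core. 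Your re-derivation of (c) via the $e^{\pm t}$ growth argument is literally the paper's own Step 5 in the proof of Theorem \ref{thm:flow1} (the invariance $V_{t}\mathscr{H}_{\infty}\subseteq\mathscr{H}_{\infty}$, which you establish in (a), is exactly what licenses the ODE for $t\mapsto\langle V_{t}v,f_{\pm}\rangle$), so the proof is maximally consonant with the paper's methods; what this buys is a self-contained treatment whose only external input is G{\aa}rding's density theorem, which you correctly flag as indispensable --- without density of $\mathscr{H}_{\infty}$ the adjoint in the conclusion is not even defined. Two cosmetic points, neither a gap: in (a), the cleanest justification that $d\mathcal{U}(x)w\in\mathscr{H}_{\infty}$ is that $g\mapsto\mathcal{U}(g)\,d\mathcal{U}(x)w$ is the left-invariant vector field attached to $x$ applied to the smooth map $g\mapsto\mathcal{U}(g)w$, hence again smooth --- your displayed formula says this, but ``smoothness in $g$'' should be grounded that way; and in (c) it is worth stating explicitly that the symbol $d\mathcal{U}(x)$ on the right-hand side of the lemma's conclusion must be read as the closure (equivalently the generator $A$), which your chain $\bigl(d\mathcal{U}(x)|_{\mathscr{H}_{\infty}}\bigr)^{*}=A^{*}=-A$ makes precise.
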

\uline{Note.} If $X$ is an operator with dense domain, then domain
of its adjoint $X^{*}$ is $\left\{ w\in\mathscr{H}\:\big|\:\exists C=C_{w}<\infty\;\mbox{s.t. }\left|\left\langle w,Xu\right\rangle \right|\leq C\left\Vert u\right\Vert ,\;\forall u\in dom\left(X\right)\right\} $.
\begin{example}
\label{ex:1D}Let $H_{1}=\frac{d}{dx}\big|_{C_{c}^{\infty}\left(0,1\right)}$
in $L^{2}\left(0,1\right)$. $H$ is densely defined, skew-symmetric,
with deficiency indices $\left(1,1\right)$. $H_{1}$ is extendable
but not integrable. 

On the other hand, $H_{2}=\frac{d}{dx}\big|_{C_{c}^{\infty}\left(\mathbb{R}\right)}$
is densely defined, skew-symmetric, acting in $L^{2}\left(\mathbb{R}\right)$,
and it has deficiency indices $\left(0,0\right)$; i.e., $\overline{H_{2}}^{*}=-\overline{H_{2}}$,
skew-adjoint. $\overline{H_{2}}$ generates the one-parameter unitary
group $\left\{ U\left(t\right)=e^{-t\overline{H_{2}}}\right\} _{t\in\mathbb{R}}$,
where 
\[
U\left(t\right)f\left(x\right)=f\left(x-t\right)
\]
for all $f\in L^{2}\left(\mathbb{R}\right)$. Therefore, $H_{2}$
is integrable.
\end{example}

\begin{example}
\label{ex:log}Let $M$ denote the Riemann surface of the complex
$\log z$ function. We will realize $M$ as a covering space for $\mathbb{R}^{2}\backslash\left\{ \left(0,0\right)\right\} $
with an infinite number of sheets indexed by $\mathbb{Z}$ as follows
(see Fig \ref{fig:logz}):

\begin{figure}[H]
\begin{tabular}{cc}
\includegraphics[scale=0.6]{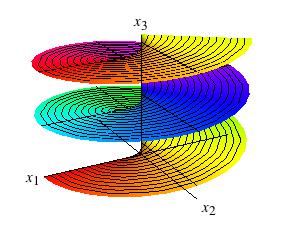} & \includegraphics[scale=0.6]{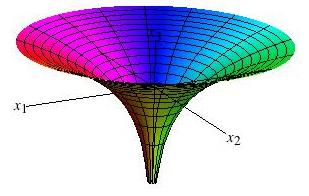}\tabularnewline
\end{tabular}

\protect\caption{\label{fig:logz}$M$ the Riemann surface of $\log z$ as an $\infty$
cover of $\mathbb{R}^{2}\backslash\left\{ \left(0,0\right)\right\} $. }

\end{figure}

Rotate the real $\log x$, $x\in\mathbb{R}_{+}$, in the $\left(x_{1},x_{2}\right)$
plane while creating spirals in the $x_{3}$-direction, one full rotation
for each interval $n\leq x_{3}<n+1$, $n\in\mathbb{Z}$. 

The measure of $L^{2}\left(M\right)$ and $C_{c}^{\infty}\left(M\right)$
derive from the i.e., the 2D-Lebesgue measure lifts to a unique measure
on $M$; hence $L^{2}\left(M\right)$. Here the two skew symmetric
operators $\frac{\partial}{\partial x_{j}}$, $j=1,2$ with domain
$C_{c}^{\infty}\left(M\right)$ define an abelian 2-dimensional Lie
algebra of densely defined operators in the Hilbert space $L^{2}\left(M\right)$. \end{example}
\begin{prop}
\label{prop:logz}(i) The $\left\{ \frac{\partial}{\partial x_{j}}\right\} _{j=1,2}$
Lie algebra with domain $C_{c}^{\infty}\left(M\right)\subset L^{2}\left(M\right)$
is not extendable (see Definition \ref{def:ext}).

(ii) Each operator $\frac{\partial}{\partial x_{j}}$ on $C_{c}^{\infty}\left(M\right)$
is essentially skew-adjoint, i.e., 
\begin{equation}
-\left(\frac{\partial}{\partial x_{j}}\Big|_{C_{c}^{\infty}\left(M\right)}\right)^{*}=\mbox{closure}\left(\frac{\partial}{\partial x_{j}}\Big|_{C_{c}^{\infty}\left(M\right)}\right),\; j=1,2.\label{eq:a1}
\end{equation}

(iii) The two skew-adjoint operators in (\ref{eq:a1}) are \uline{not}
strongly commuting.

(iv) The operator 
\begin{equation}
L:=\left(\frac{\partial}{\partial x_{1}}\right)^{2}+\left(\frac{\partial}{\partial x_{2}}\right)^{2}\;\mbox{on }C_{c}^{\infty}\left(M\right)\label{eq:a2}
\end{equation}
has deficiency indices $\left(\infty,\infty\right)$. \end{prop}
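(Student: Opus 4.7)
The plan is to prove (ii) and (iii) first; (i) will then follow as a logical consequence, and (iv) is established separately by an angular Fourier decomposition on $M$.

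For (ii), I would directly compute the deficiency indices of $H_j := \partial/\partial x_j|_{C_c^\infty(M)}$ in $L^2(M)$. A distributional solution $f \in L^2(M)$ of the deficiency equation $H_j^* f = \pm f$ satisfies the first-order PDE $\partial f/\partial x_j = \mp f$ on $M$, so multiplying by $e^{\pm x_j}$ shows that $e^{\pm x_j} f$ is constant along each flow line of $\partial/\partial x_j$. In $M \cong \mathbb{R}_+ \times \mathbb{R}$, these flow lines, outside the codimension-one measure-zero set of lifts of the $x_j$-axis, are complete copies of $\mathbb{R}$ (they are lifts of horizontal or vertical lines in $\mathbb{R}^2\setminus\{0\}$ missing the relevant coordinate axis); on such a line $\int_\mathbb{R} e^{\mp 2 x_j}\,dx_j = \infty$, and Fubini forces the constant to vanish for a.e.\ line. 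Hence $f = 0$ in $L^2(M)$, the deficiency indices are $(0,0)$, and the essential skew-adjointness (\ref{eq:a1}) follows.

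For (iii), I would exhibit an explicit failure of strong commutativity of the unitary groups $U_j(t) := \exp(t\, \overline{\partial_j})$ by tracking the global angular coordinate $\theta$ on $M$. Fix $0 < \delta \ll 1$ and let $f \in C_c^\infty(M)$ be supported in a small neighborhood of the point of $M$ lying over $(1, \delta)$ with $\theta \approx \delta$. The composition $U_2(-2\delta) U_1(-2)$ drags the support along $(1,\delta) \to (-1,\delta) \to (-1,-\delta)$, passing above the origin and sweeping $\theta$ through $\pi - \delta$ to $\pi + \delta$; the reverse composition $U_1(-2) U_2(-2\delta)$ drags it along $(1,\delta) \to (1,-\delta) \to (-1,-\delta)$, passing below the origin and sweeping $\theta$ through $-\delta$ to $-\pi + \delta$. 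The two terminal $\theta$-values differ by $2\pi$, placing them on distinct sheets of the universal cover $M$, so the two images have disjoint supports in $M$ and cannot be equal. Item (i) now follows immediately: an integrable extension would give a unitary representation $\mathcal{U}$ of $(\mathbb{R}^2,+)$ with $d\mathcal{U}(e_j) \supseteq \partial_j$, and by (ii) plus uniqueness of skew-adjoint extensions $d\mathcal{U}(e_j) = \overline{\partial_j}$; but then $\overline{\partial_1}$ and $\overline{\partial_2}$ would strongly commute by abelianness of $G$, contradicting (iii).

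For (iv), the plan is to diagonalize via the Fourier transform $\mathcal{F}$ in $\theta$. Since $M \cong \mathbb{R}_+\times\mathbb{R}$ with measure $r\,dr\,d\theta$, the map $\mathcal{F}$ sends $L^2(M)$ to $L^2(\mathbb{R}_+\times\mathbb{R}, r\,dr\,d\mu)$ and conjugates $\partial_\theta^2 \mapsto -\mu^2$, so the Laplacian $L$ becomes the direct integral $\int_\mathbb{R}^\oplus L_\mu\,d\mu$ of the radial Bessel-type operators $L_\mu := \partial_r^2 + (1/r)\partial_r - \mu^2/r^2$ on $L^2(\mathbb{R}_+, r\,dr)$. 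The Liouville substitution $g(r) := r^{1/2} f(r)$ carries $L_\mu$ to $\partial_r^2 + (1/4 - \mu^2)/r^2$ on $L^2(\mathbb{R}_+, dr)$, and the classical limit-point/limit-circle analysis of the inverse-square potential gives deficiency indices $(1,1)$ when $\mu^2 - 1/4 < 3/4$, i.e.\ $|\mu| < 1$, and $(0,0)$ otherwise. The deficiency space of $L$ is therefore a direct integral of one-dimensional fibers over $\{|\mu| < 1\}$, isomorphic to $L^2((-1,1), d\mu)$, hence infinite-dimensional, yielding indices $(\infty,\infty)$.

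The main technical obstacle lies in (iv): one must verify that $C_c^\infty(M)$ pushes forward under $\mathcal{F}$ to a core for each fiber $L_\mu$, so that the deficiency indices of the direct integral genuinely coincide with those of $L$ on its original domain. A secondary bookkeeping point in (iii) is choosing the support of $f$ and the parameter $\delta$ small enough that the two displaced supports are genuinely disjoint in $M$.
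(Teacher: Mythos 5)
Your proposal is correct, and for parts (i) and (iii) it coincides with the paper's argument: the paper also detects failure of strong commutativity by translating a test function $\varphi$ around the puncture in the two orders, as in (\ref{eq:a5}) and Fig \ref{fig:trans}, so that the two images land on sheets differing by a full turn ($\theta$-values $2\pi$ apart) and hence have disjoint supports; and (i) follows by the same uniqueness-of-skew-adjoint-extensions argument you give. You diverge in two places, both legitimately. For (ii), the paper simply lifts the coordinate translations (\ref{eq:a3})--(\ref{eq:a4}) to unitary one-parameter groups on $L^{2}(M)$ and asserts that their generators are the closures of $\partial/\partial x_{j}$; your direct computation of the deficiency equations $\partial f/\partial x_{j}=\mp f$ along flow lines (complete lines off the measure-zero lifts of the axis, where $e^{\mp 2x_{j}}\notin L^{1}(\mathbb{R})$) is more self-contained and in effect supplies the core/generator verification the paper leaves implicit. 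For (iv), the paper first invokes Nelson's theorem \cite{Nel59} to conclude from (iii) that $L$ is not essentially selfadjoint with equal indices, and then obtains $(\infty,\infty)$ from the same polar-coordinate decomposition you use, carried out in detail in the proposition following (\ref{eq:log6}) and in \cite{Tia11}; you bypass Nelson entirely and read off $(\infty,\infty)$ from the fiber analysis, which is cleaner but makes your flagged ``technical obstacle'' the whole burden. On that point, your formulation is slightly off: one does not need $C_{c}^{\infty}(M)$ to push forward to a core ``for each fiber'' (no single vector sits in one fiber of a direct integral); rather, the adjoint of $L$ on $C_{c}^{\infty}(M)$ is the maximal (distributional) operator, which does decompose fiberwise, so the deficiency space (\ref{eq:D1}) consists exactly of the measurable fields $\widehat{\psi}(r,\xi)=g(\xi)K_{\xi}(r)$, $\xi\in(-1,1)$, subject to square-integrability in the weight $\xi\mapsto\int_{0}^{\infty}|K_{\xi}(r)|^{2}\,r\,dr$. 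Infinite-dimensionality then requires this weight to be finite and locally bounded on $(-1,1)$: your limit-circle-at-$0$ plus decay-at-$\infty$ argument gives finiteness qualitatively, while the paper pins it down exactly as $\frac{1}{2}\frac{\pi\xi}{\sin\pi\xi}$ via Nicholson's formula in Lemma \ref{lem:Kv}, yielding the precise weighted space (\ref{eq:log6}); either suffices for the stated conclusion.
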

\begin{proof}
The two operators $\frac{\partial}{\partial x_{j}}$ generate unitary
one-parameter groups $U_{j}\left(t\right)$, $j=1,2$, acting on $L^{2}\left(M\right)$
since the two coordinate translations
\begin{eqnarray}
\left(x_{1},x_{2}\right) & \longmapsto & \left(x_{1}+t,x_{2}\right),\; x_{2}\neq0\label{eq:a3}\\
\left(x_{1},x_{2}\right) & \longmapsto & \left(x_{1},x_{2}+t\right),\; x_{1}\neq0\label{eq:a4}
\end{eqnarray}
lift to unitary one-parameter groups acting on $L^{2}\left(M\right)$;
and it is immediate that the respective infinitesimal generators are
the closed operators $\frac{\partial}{\partial x_{j}}$.

If $\varphi\in C_{c}^{\infty}\left(M\right)$ is supported over some
open set in $\mathbb{R}^{2}\backslash\left\{ \left(0,0\right)\right\} $,
for example, $\left(x_{1}-2\right)^{2}+x_{2}^{2}<1$, if $1<s<2$,
$1<t<2$, then the two functions 
\begin{equation}
U_{1}\left(s\right)U_{2}\left(t\right)\varphi\;\mbox{and }U_{2}\left(t\right)U_{1}\left(s\right)\varphi\label{eq:a5}
\end{equation}
are supported on different sheets in the covering $M\longrightarrow\mathbb{R}^{2}\backslash\left\{ \left(0,0\right)\right\} $,
two levels opposite; see Fig \ref{fig:trans}. Hence the two unitary
groups $\left\{ U_{1}\left(s\right)\right\} _{s\in\mathbb{R}}$ and
$\left\{ U_{2}\left(t\right)\right\} _{t\in\mathbb{R}}$ do not commute. 

Hence it follows from Nelson's theorem \cite{Nel59} that $L$ in
(\ref{eq:a2}) is not essentially selfadjoint. Since $L\leq0$ (in
the sense of Hermitian operators) its deficiency indices are equal.
It was proved in \cite{Tia11} that the indices are $\left(\infty,\infty\right)$;
see also details below.
\end{proof}
\begin{figure}[H]
\includegraphics[scale=0.6]{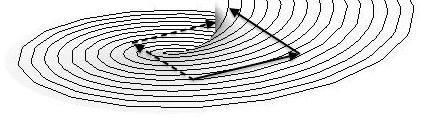}

\protect\caption{\label{fig:trans}Translation of $\varphi$ to different sheets.}
\end{figure}

\begin{prop}
\label{prop:irred}Let $M$ be the $\infty$-covering Riemann surface
of $\log z$, and let 
\[
\left\{ U_{1}\left(s\right)\right\} _{s\in\mathbb{R}},\;\left\{ U_{2}\left(t\right)\right\} _{t\in\mathbb{R}}
\]
be the two unitary one-parameter groups in $L^{2}\left(M\right)$
from Proposition \ref{prop:logz}. Then the two groups together act
irreducibly on $L^{2}\left(M\right)$. \end{prop}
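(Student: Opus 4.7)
The plan is to show the commutant $\mathcal{M}' := \{U_1(s), U_2(t) : s, t \in \mathbb{R}\}'$ equals $\mathbb{C} I$ by establishing that the von Neumann algebra $\mathcal{M} := (\mathcal{M}')'$ contains the maximal abelian subalgebra $L^\infty(M)$, so that $\mathcal{M}' \subset L^\infty(M)$; the remaining argument is then to force any multiplication operator $M_\phi \in \mathcal{M}'$ to be constant via ergodicity of the lifted translation flows on $M$.

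The central step is a refinement of the sheet-changing observation already used in \ref{prop:logz}. Let $V_\sigma \in \mathcal{B}(L^2(M))$ denote the unitary induced by a generator $\sigma$ of the deck $\mathbb{Z}$-action, and for $s, t > 0$ put $R(s, t) := \pi^{-1}\bigl((-s, 0) \times (-t, 0)\bigr) \subset M$. A direct pullback computation along the trajectory $p \to p + t e_2 \to p + t e_2 + s e_1 \to p + s e_1 \to p$ yields the operator identity
\[
C_{s, t} \;:=\; U_2(-t)\,U_1(-s)\,U_2(t)\,U_1(s) \;=\; I + (V_\sigma^{\pm 1} - I)\, M_{\chi_{R(s, t)}},
\]
with analogous identities obtained by varying the signs of $s, t$ to produce rectangles in each quadrant adjacent to the origin. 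Since $R(s, t)$ is $\sigma$-invariant, $M_{\chi_R}$ commutes with $V_\sigma$, and therefore $p(V_\sigma)\,M_{\chi_R} \in \mathcal{M}$ for every polynomial $p$ with $p(1) = 0$. The key spectral input is that $V_\sigma$ has \emph{no eigenvector} with eigenvalue $1$ in $L^2(M)$: such a vector would be constant along each of the $\mathbb{Z}$-fibers of $\pi$, but then summing its square-modulus over an infinite fiber diverges. Hence the spectral measure of $V_\sigma$ has no atom at $1$, and approximating $\mathbf{1}_{\mathbb{T}}$ strongly by continuous functions that vanish at $1$ gives $M_{\chi_{R(s, t)}} \in \mathcal{M}$ as a strong-operator limit; substituting this back into $(V_\sigma - I) M_{\chi_R}$ recovers $V_\sigma \in \mathcal{M}$. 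Linear combinations of characteristic functions of rectangles (using all four quadrants) together with strong-operator limits produce $M_\phi \in \mathcal{M}$ for every sheet-invariant $\phi \in L^\infty(M)$, and conjugation by $V_\sigma^n$ then completes the inclusion $L^\infty(M) \subset \mathcal{M}$.

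With $\mathcal{M}' \subset L^\infty(M)$ in hand, any $A \in \mathcal{M}'$ is multiplication by some $\phi \in L^\infty(M)$ that must be invariant (a.e.) under the group generated by the lifted $x_1$- and $x_2$-translation flows and the deck $\mathbb{Z}$-action. This combined group acts transitively on $M$: any two points $p, q \in M$ can be joined by a piecewise horizontal-vertical path in $\mathbb{R}^2 \setminus \{0\}$ whose winding number around the origin is chosen to match the sheet difference between $p$ and $q$. Consequently $\phi$ is constant and $A = cI$, proving irreducibility. The main obstacle is the separation step of isolating $M_{\chi_R}$ from the product $(V_\sigma - I) M_{\chi_R}$ as an element of $\mathcal{M}$; this rests crucially on the absence of eigenvalue $1$ for $V_\sigma$, which itself reflects the infiniteness of the $\mathbb{Z}$-cover. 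Once that separation is achieved, the remaining phases reduce to standard commutant and ergodicity arguments.
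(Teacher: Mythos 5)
Your commutator identity $C_{s,t}=I+(V_{\sigma}^{\pm1}-I)M_{\chi_{R(s,t)}}$ is correct and is in fact a sharpened, quantitative form of the paper's lemma (which only records that $C(s,t)$ is the identity on the box), and the extraction step is sound as far as it goes: since $\chi_{R(s,t)}$ is $\sigma$-invariant, powers of $C_{s,t}-I$ yield $p(V_{\sigma})M_{\chi_{R}}$ for polynomials with $p(1)=0$, and your no-atom argument is right (a $V_{\sigma}$-eigenvector with eigenvalue $1$ would be $2\pi$-periodic in $\theta$, and summing $|f|^{2}$ over the infinitely many sheets forces $f=0$), so $M_{\chi_{R}}\in\mathcal{M}$ and then $V_{\sigma}\in\mathcal{M}$ by the four-quadrant summation. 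The gap is the final clause: conjugation by $V_{\sigma}^{n}$ fixes every sheet-invariant multiplication operator pointwise, since $V_{\sigma}M_{\pi^{*}\psi}V_{\sigma}^{*}=M_{\pi^{*}\psi\circ\sigma^{-1}}=M_{\pi^{*}\psi}$. So everything you construct lies in the von Neumann algebra generated by $\pi^{*}L^{\infty}(\mathbb{R}^{2}\setminus\{0\})$ and $V_{\sigma}$, and you never reach multiplication by a function that distinguishes sheets (a bounded, non-$2\pi$-periodic function of $\theta$), e.g.\ the indicator of a single sheet. The claimed inclusion $L^{\infty}(M)\subset\mathcal{M}$ is therefore not established.

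Worse, this step cannot be repaired, because the inclusion is false: by uniqueness of path lifting, the deck transformation commutes with the lifted translation flows (if $\gamma$ lifts a base segment starting at $p$, then $\sigma\gamma$ is the lift starting at $\sigma p$), hence $V_{\sigma}U_{j}(t)=U_{j}(t)V_{\sigma}$ for $j=1,2$ and all $t$, so $V_{\sigma}\in\mathcal{M}'$ and $\mathcal{M}\subseteq\{V_{\sigma}\}'$. Since the indicator of one sheet does not commute with $V_{\sigma}$, we get $L^{\infty}(M)\not\subseteq\mathcal{M}$; combined with your (correct) derivation that $V_{\sigma}\in\mathcal{M}$, the unitary $V_{\sigma}$ is actually \emph{central} in $\mathcal{M}$, and the commutant contains the diffuse abelian algebra $\{V_{\sigma}\}''$ (in the angular Fourier picture of (\ref{eq:log1})--(\ref{eq:log2}), multiplications by $1$-periodic bounded functions of $\xi$). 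You should be aware that the same obstruction afflicts the paper's own argument: spectral projections of $V_{\sigma}$ commute with all the commutators $C(s,t)$ of (\ref{eq:a7}) and with both one-parameter groups, yet are not multiplication operators, so the paper's unproved assertion that such a projection ``must be a multiplication operator'' fails as stated; note also that irreducibility would force the generated von Neumann algebra to be all of $\mathcal{B}(L^{2}(M))$ (type $\mathrm{I}$), which is inconsistent with the paper's later remark that this algebra is likely a type $\mathrm{III}$ factor. Your second-half transitivity/ergodicity argument is fine (and matches the paper's), but it only rules out nonconstant \emph{multiplication} operators in the commutant, which is not where the obstruction lives.
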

\begin{proof}
We shall need the following lemma. \end{proof}
\begin{lem}
Let $\varepsilon,s,t\in\mathbb{R}_{+}$, and pick a sheet $M_{k}$
of the surface $M$(see Fig \ref{fig:logz}). On $M_{k}$, consider
the ``box'' $B_{s,t}=\left\{ \left(x_{1},x_{2}\right)\:\big|\:\varepsilon<x_{1}<s,\varepsilon<x_{2}<t\right\} $,
then the commutator
\begin{equation}
C\left(s,t\right)=U_{1}\left(s\right)U_{2}\left(t\right)U_{1}\left(-s\right)U_{2}\left(-t\right)\label{eq:a7}
\end{equation}
acts as the identity operator in $B_{s,t}$. \end{lem}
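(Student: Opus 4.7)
The plan is to interpret $C(s,t)$ geometrically: the commutator is the lift to the covering $M \to \mathbb{R}^{2}\setminus\{(0,0)\}$ of a closed rectangular loop downstairs, and for base points over $B_{s,t}$ this loop lies in a simply-connected region of the base, so it lifts to a closed loop on $M$.

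First I would unwind the definitions. Since each $U_{j}(\tau)$ is generated by $\partial/\partial x_{j}$, it acts by pullback under the (partially defined) lift $T^{j}_{\tau}:M\to M$ of the Euclidean translation by $\tau e_{j}$ on $\mathbb{R}^{2}\setminus\{(0,0)\}$. Unpacking $C(s,t)=U_{1}(s)U_{2}(t)U_{1}(-s)U_{2}(-t)$ gives
\[
(C(s,t)\varphi)(p)=\varphi\bigl(T^{2}_{-t}\,T^{1}_{-s}\,T^{2}_{t}\,T^{1}_{s}(p)\bigr),
\]
so the lemma reduces to showing that the composition $T^{2}_{-t}\,T^{1}_{-s}\,T^{2}_{t}\,T^{1}_{s}$ fixes every point $p\in M$ whose projection $\pi(p)$ lies in $B_{s,t}$.

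Next I would observe that, projected to $\mathbb{R}^{2}\setminus\{(0,0)\}$, the four successive translations trace out the rectangle with vertices $(x_{1},x_{2})$, $(x_{1}+s,x_{2})$, $(x_{1}+s,x_{2}+t)$, $(x_{1},x_{2}+t)$, where $(x_{1},x_{2})=\pi(p)$. For $(x_{1},x_{2})\in B_{s,t}$, every edge of this rectangle is contained in $[\varepsilon,2s]\times[\varepsilon,2t]$, a simply-connected subset of $\mathbb{R}^{2}\setminus\{(0,0)\}$. The unique path-lifting property of the covering $M \to \mathbb{R}^2\setminus\{(0,0)\}$ then forces the lift of this null-homotopic loop starting at $p$ to be itself closed, so its endpoint is $p$ and we conclude $(C(s,t)\varphi)(p)=\varphi(p)$ for every such $p$.

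The only technical point to check, and it is an annoyance rather than a genuine obstacle, is that each intermediate point lies in the domain of the next factor, i.e., no edge of the rectangle meets the origin; this follows from the strict positivity bounds $x_{1}>\varepsilon$ and $x_{2}>\varepsilon$, which persist along every edge. (For completeness one also notes that if $\pi(p)\notin B_{s,t}$ and $\varphi$ is supported over $B_{s,t}$, then the endpoint of the projected loop is again $\pi(p)\notin B_{s,t}$, so both sides vanish automatically; this clarifies the meaning of \textquotedblleft acts as the identity in $B_{s,t}$.\textquotedblright)
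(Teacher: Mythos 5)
Your proof is correct and is essentially the paper's own argument made explicit: the paper proves the lemma by simply pointing to the sheet-tracking reasoning below (\ref{eq:a5}) and Fig \ref{fig:trans}, namely that each $U_{j}$ is the lift of a coordinate translation, so the commutator traces a rectangular loop which, for points over the box in the open first quadrant, stays in a simply connected region avoiding the puncture and hence lifts to a closed loop on $M$. Your unique-path-lifting formulation (including the check that no edge meets the origin) supplies precisely the details the paper leaves implicit.
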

\begin{proof}
This follows from the reasoning below (\ref{eq:a5}) and Fig \ref{fig:trans}.
\end{proof}

\begin{proof}[Proof of Proposition \ref{prop:irred} continued]
If $P$ is a projection in $L^{2}\left(M\right)$ which commutes
with all the operators $C\left(s,t\right)$ in (\ref{eq:a7}), then
$P$ must be a multiplication operator; say multiplication by some
function $f$, $f=\overline{f}=f^{2}$. If $P$ also commutes with
each of the unitary one-parameter groups $\left\{ U_{1}\left(s\right)\right\} $
and $\left\{ U_{2}\left(t\right)\right\} $, it follows from that
(\ref{eq:a3})-(\ref{eq:a4}) that the function $f$ must be constant;
hence $f\equiv0$, or $f\equiv1$. Consequently $P=0$, or $P=I_{L^{2}\left(M\right)}$,
and it follows that the system $\left\{ U_{1}\left(s\right),U_{2}\left(t\right)\right\} $
is irreducible.
\end{proof}

Let $X_{j}=\frac{\partial}{\partial x_{j}}\Big|_{C^{\infty}\left(M\right)}$,
$j=1,2$, as in Proposition \ref{prop:logz}. We see that each $X_{j}$
is essentially skew-adjoint, but $e^{t\overline{X_{1}}}e^{s\overline{X_{2}}}\neq e^{s\overline{X_{2}}}e^{t\overline{X_{1}}}$
globally. Set 
\begin{equation}
\mathscr{D}_{\infty}:=\bigcap_{l_{1},l_{2}\in\mathbb{N}}\mathscr{D}(\overline{X}_{1}^{l_{1}}\overline{X}_{2}^{l_{2}})=\bigcap_{n=1}^{\infty}dom\left(\overline{L}^{n}\right)\label{eq:mdom}
\end{equation}
where $l_{1},l_{2}\in\mathbb{N}$. Let 
\begin{eqnarray}
L & := & \Delta\Big|_{\mathscr{D}_{\infty}}\label{eq:L1}\\
L' & := & \Delta\Big|_{C_{c}^{\infty}\left(M\right)}\label{eq:L2}
\end{eqnarray}
where $\Delta=(\frac{\partial}{\partial x_{1}})^{2}+(\frac{\partial}{\partial x_{2}})^{2}$. 

By Nelson's theorem \cite{Nel59}, we have the following equivalence
\begin{align*}
\mbox{the two operators } & \overline{X_{1}},\overline{X_{2}}\;\mbox{commute strongly}\\
 & \Updownarrow\\
L^{*} & =L
\end{align*}
Since $L\leq0$, it suffices to consider the deficiency space
\begin{equation}
\mathscr{D}_{1}(L):=\{\psi\in\mathscr{D}(L^{*}):L^{*}\psi=\psi\}.\label{eq:D1}
\end{equation}
By elliptic regularity, we have $\psi\in C^{\infty}(M)\cap L^{2}(M)$,
for all $\psi\in\mathscr{D}_{1}\left(L\right)$. 
\begin{rem}
There is a distinction between the two versions of Laplacian. For
example, if $M=\mathbb{R}^{2}\backslash\left\{ \left(0,0\right)\right\} $,
i.e., the punctured plane, then $\mathscr{D}_{\infty}=C^{\infty}$-vectors
for the unitary representation of $G=\left(\mathbb{R}^{2},+\right)$
on $L^{2}\left(M\right)$. In this case, $L$ is the free Hamiltonian
which has deficiency indices $\left(0,0\right)$, however $L'$ has
indices $\left(1,1\right)$. The corresponding unitary groups are
translations in the two coordinate directions of $\mathbb{R}^{2}$,
and they do commute. Therefore, $\overline{X_{1}}$ and $\overline{X_{2}}$
strongly commute, and the two dimension Lie algebra with generators
$X_{j}$ is integrable. 

Since in the $\log z$ example, $\overline{X_{1}}$ and $\overline{X_{2}}$
do \uline{not} strongly commute, it follows that $X_{1}^{2}+X_{2}^{2}$
has deficiency indices $\left(m,m\right)$, $m>0$. We show below
that $m=\infty$ \cite{Tia11}. \end{rem}
\begin{lem}
\label{lem:Kv}Let $K_{\nu}$ be the modified Bessel function of the
second kind of order $\nu$, and suppose $\nu\in(-1,1)$. Then 
\begin{equation}
\int_{0}^{\infty}\left|K_{\nu}(z)\right|^{2}zdz=\frac{1}{2}\frac{\pi\nu}{\sin\pi\nu}.\label{eq:logz inf sur12-1}
\end{equation}
\end{lem}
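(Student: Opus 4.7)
The plan is to deduce (\ref{eq:logz inf sur12-1}) as a special case of the classical Mellin-type evaluation for a product of two Macdonald functions. Specifically, for $\operatorname{Re}\rho>2|\operatorname{Re}\nu|$ one has the identity
$$
\int_0^\infty z^{\rho-1} K_\nu(z)^2\,dz \;=\; \frac{2^{\rho-3}}{\Gamma(\rho)}\,\Gamma\!\Big(\tfrac{\rho}{2}+\nu\Big)\Gamma\!\Big(\tfrac{\rho}{2}-\nu\Big)\Gamma\!\Big(\tfrac{\rho}{2}\Big)^{\!2}.
$$
Setting $\rho=2$, which lies in the region of absolute convergence precisely because $|\nu|<1$, collapses the right-hand side to $\tfrac12\Gamma(1+\nu)\Gamma(1-\nu)$; then Euler's reflection $\Gamma(\nu)\Gamma(1-\nu)=\pi/\sin\pi\nu$ combined with $\Gamma(1+\nu)=\nu\Gamma(\nu)$ delivers $\tfrac{\pi\nu}{2\sin\pi\nu}$, as claimed. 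The absolute value in (\ref{eq:logz inf sur12-1}) is purely cosmetic, since $K_\nu(z)\in\mathbb{R}$ for real $\nu$ and $z>0$.

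For a self-contained derivation I would begin from the standard integral representation
$$
K_\nu(z)=\int_0^\infty e^{-z\cosh t}\cosh(\nu t)\,dt, \qquad z>0,
$$
square it, multiply by $z$, and invoke Fubini--Tonelli. Integrability of $z\,|K_\nu(z)|^2$ at $z=0^+$ uses the asymptotic $K_\nu(z)=O(z^{-|\nu|})$, whence $|\nu|<1$ is sharp; at infinity it follows from $K_\nu(z)\sim(\pi/2z)^{1/2}e^{-z}$. The inner $z$-integral is elementary, $\int_0^\infty z\,e^{-z(\cosh u+\cosh v)}dz=(\cosh u+\cosh v)^{-2}$, reducing the problem to
$$
I\;:=\;\int_0^\infty\!\!\int_0^\infty\frac{\cosh(\nu u)\cosh(\nu v)}{(\cosh u+\cosh v)^2}\,du\,dv.
$$

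The main obstacle is the closed-form evaluation of $I$. My plan is to change variables $x=(u+v)/2$, $y=(u-v)/2$ (Jacobian $2$, region $\{x\ge|y|\}$), apply $\cosh u+\cosh v=2\cosh x\cosh y$ together with the product-to-sum identity $\cosh(\nu u)\cosh(\nu v)=\tfrac12\bigl(\cosh 2\nu x+\cosh 2\nu y\bigr)$, and then exploit the parity in $y$ and the $x\leftrightarrow y$ symmetry of the resulting integrand to pass from the wedge up to the full first quadrant $(0,\infty)^2$ at the cost of an overall factor $\tfrac14$. The integrand then separates, yielding two equal terms of the form
$$
\int_0^\infty\!\frac{dy}{\cosh^2 y}\;\cdot\;\int_0^\infty\!\frac{\cosh(2\nu x)}{\cosh^2 x}\,dx\;=\;1\cdot\frac{\pi\nu}{\sin\pi\nu},
$$
the first factor being $\tanh y\big|_0^\infty=1$ and the second being obtained via $t=\tanh s$, which reduces the hyperbolic integral to the beta integral $B(1+\nu,1-\nu)=\Gamma(1+\nu)\Gamma(1-\nu)=\pi\nu/\sin\pi\nu$. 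Reassembling, $I=\tfrac14\cdot 2\cdot\tfrac{\pi\nu}{\sin\pi\nu}=\tfrac{\pi\nu}{2\sin\pi\nu}$, which is (\ref{eq:logz inf sur12-1}).
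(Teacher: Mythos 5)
Your proposal is correct; both of your routes deliver the stated value, but they sit at different distances from the paper's own proof. Your first route---quoting the tabulated Mellin transform $\int_0^\infty z^{\rho-1}K_\nu(z)^2\,dz=\frac{2^{\rho-3}}{\Gamma(\rho)}\Gamma\bigl(\tfrac{\rho}{2}+\nu\bigr)\Gamma\bigl(\tfrac{\rho}{2}-\nu\bigr)\Gamma\bigl(\tfrac{\rho}{2}\bigr)^{2}$ at $\rho=2$---is essentially the paper's argument in compressed form: the paper derives precisely this special case by combining Nicholson's formula (\ref{eq:logz inf sur12-2}) at $\mu=\nu$ with the single-$K$ Mellin transform (\ref{eq:logz inf sur14}) at $\beta=2$, then Fubini, the reflection formula (\ref{eq:logz inf sur16}), and $\int_0^\infty\cosh^{-2}t\,dt=1$; the table entry you cite is classically obtained in exactly this way. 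Your second, self-contained route is the genuinely different (and more elementary) contribution: it replaces Nicholson's formula---the one nontrivial external input in the paper's proof---by the basic representation $K_\nu(z)=\int_0^\infty e^{-z\cosh t}\cosh(\nu t)\,dt$, Tonelli (legitimate, since the triple integrand is nonnegative for real $\nu$), the rotation $x=(u+v)/2$, $y=(u-v)/2$, and a beta integral. Your bookkeeping checks out: the wedge-to-quadrant passage is in fact cost-free (the factor $2$ from evenness in $y$ cancels the $\tfrac12$ from the $x\leftrightarrow y$ symmetrization), and your overall $\tfrac14$ correctly aggregates the product-to-sum $\tfrac12$, the Jacobian $2$, and the denominator $(2\cosh x\cosh y)^2=4\cosh^2 x\cosh^2 y$, so $I=\tfrac14\cdot 2\cdot 1\cdot\tfrac{\pi\nu}{\sin\pi\nu}$ as you state; likewise $\int_0^\infty\cosh(2\nu x)\cosh^{-2}x\,dx=B(1+\nu,1-\nu)=\Gamma(1+\nu)\Gamma(1-\nu)=\tfrac{\pi\nu}{\sin\pi\nu}$ for $|\nu|<1$ (e.g.\ via $w=e^{-2x}$, which yields $\int_0^\infty w^{\nu}(1+w)^{-2}\,dw$). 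It is worth noticing that your two hyperbolic integrals mirror the paper's closing steps, namely (\ref{eq:logz inf sur15}) and the final evaluation $\int_0^\infty\cosh^{-2}t\,dt=1$, so your elementary route in effect re-proves the $\mu=\nu$ case of Nicholson's identity implicitly; what it buys is independence from \cite[pg.\ 440]{Wat44}, at the price of a two-variable change of variables. One pedantic correction: at $\nu=0$ the small-$z$ bound should read $K_0(z)=O(\log(1/z))$ rather than $O(z^{-|\nu|})$---harmless, since this is still square-integrable against $z\,dz$, and $B(1,1)=1$ matches $\lim_{\nu\to 0}\pi\nu/\sin\pi\nu=1$.
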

\begin{proof}
By Nicholson's integral representation of $K_{\nu}$ \cite[pg. 440]{Wat44},
we have 
\begin{equation}
K_{\mu}(z)K_{\nu}(z)=2\int_{0}^{\infty}K_{\mu+\nu}(2z\cosh t)\cosh((\mu-\nu)t)dt.\label{eq:logz inf sur12-2}
\end{equation}
Since $K_{\nu}$ is real-valued, setting $\mu=\nu$, it follows that
\begin{equation}
\left|K_{\nu}(z)\right|^{2}=2\int_{0}^{\infty}K_{2\nu}(2z\cosh t)dt\label{eq:logz inf sur13}
\end{equation}

We recall the following identity \cite[pg. 388, eq. (8)]{Wat44}:
\begin{equation}
\int_{0}^{\infty}K_{\nu}(z)z^{\beta-1}dz=2^{\beta-2}\Gamma\left(\frac{\beta+\nu}{2}\right)\Gamma\left(\frac{\beta-\nu}{2}\right),\ \Re(\beta)>\left|\Re(\nu)\right|.\label{eq:logz inf sur14}
\end{equation}
For $\nu\in(-1,1)$, $\beta=2$, (\ref{eq:logz inf sur14}) yields
\begin{eqnarray}
\int_{0}^{\infty}K_{2\nu}(2z\cosh t)z\: dz & = & \frac{1}{(2\cosh t)^{2}}\Gamma\left(\frac{2+2\nu}{2}\right)\Gamma\left(\frac{2-2\nu}{2}\right)\nonumber \\
 & = & \frac{1}{(2\cosh t)^{2}}\Gamma\left(1+\nu\right)\Gamma\left(1-\nu\right)\nonumber \\
 & = & \frac{1}{(2\cosh t)^{2}}\nu\Gamma\left(\nu\right)\Gamma\left(1-\nu\right)\nonumber \\
 & = & \frac{1}{(2\cosh t)^{2}}\left(\frac{\pi\nu}{\sin\pi\nu}\right);\label{eq:logz inf sur15}
\end{eqnarray}
where the last step follows from the identity 
\begin{equation}
\Gamma\left(\nu\right)\Gamma\left(1-\nu\right)=\frac{\pi}{\sin\pi\nu}\label{eq:logz inf sur16}
\end{equation}
of the Gamma function $\Gamma$.

Combining (\ref{eq:logz inf sur13}) and (\ref{eq:logz inf sur15}),
and using Fubinis's theorem, we get 
\begin{eqnarray*}
\int_{0}^{\infty}\left|K_{\nu}(z)\right|^{2}zdz & = & 2\int_{0}^{\infty}\left(\int_{0}^{\infty}K_{2\nu}(2z\cosh t)zdz\right)dt\\
 & = & 2\left(\frac{\pi\nu}{\sin\pi\nu}\right)\int_{0}^{\infty}\frac{1}{(2\cosh t)^{2}}dt\\
 & = & \frac{1}{2}\frac{\pi\nu}{\sin\pi\nu}\int_{0}^{\infty}\frac{1}{\cosh^{2}t}dt\\
 & = & \frac{1}{2}\frac{\pi\nu}{\sin\pi\nu}.
\end{eqnarray*}

\end{proof}
Let $M$ = the Riemann surface of complex $\log z$. Note that $M$
is covered by a single coordinate patch under polar coordinates, i.e.,
\begin{align*}
x & =r\cos\theta\\
y & =r\sin\theta
\end{align*}
where $r\in\mathbb{R}_{+}$, $\theta\in\mathbb{R}$; and it has the
standard metric 
\[
ds^{2}=dr^{2}+r^{2}d\theta^{2}.
\]
Taking Fourier transform in the $\theta$ variable leads to the decomposition
\begin{eqnarray}
L^{2}(M) & = & \int_{\mathbb{R}}^{\oplus}\mathscr{H}_{\xi}\: d\xi,\;\mbox{where}\label{eq:log1}\\
\mathscr{H}_{\xi} & := & L^{2}(\mathbb{R}_{+},rdr)\otimes span\{e^{i\xi\theta}\}.\label{eq:log2}
\end{eqnarray}
Specifically, for all $f\in L^{2}\left(M\right)$, we set
\begin{eqnarray*}
\widehat{f}\left(r,\xi\right) & := & \int_{-\infty}^{\infty}f\left(r,\theta\right)e^{-i\xi\theta}d\theta;\;\mbox{then}\\
f\left(r,\theta\right) & = & \frac{1}{2\pi}\int_{-\infty}^{\infty}\widehat{f}\left(r,\xi\right)e^{i\theta\xi}d\xi,\;\mbox{and}\\
\left\Vert f\right\Vert _{L^{2}\left(M\right)}^{2} & = & \frac{1}{2\pi}\int_{-\infty}^{\infty}\left(\int_{0}^{\infty}\left|\widehat{f}\left(r,\xi\right)\right|^{2}rdr\right)d\xi
\end{eqnarray*}
The formal $2$ dimensional Laplacian takes the form 
\begin{equation}
\Delta=\int^{\oplus}\left(\frac{1}{r}\frac{d}{dr}\left(r\frac{d}{dr}\right)-\frac{\xi^{2}}{r^{2}}\right)\otimes1.\label{eq:log3}
\end{equation}

\begin{prop}
Let $L$ be the Laplacian in (\ref{eq:L1}), and $\mathscr{D}_{1}\left(L\right)$
the deficiency space (\ref{eq:D1}) as before. Then, $\psi\in\mathscr{D}_{1}\left(L\right)$,
i.e., $\psi$ is a solution to the following equation 
\[
\Delta\psi=\psi,\;\psi\in C^{\infty}\left(M\right)\cap L^{2}\left(M\right)
\]
iff there is a Borel function $g$ supported in $(-1,1)$ and satisfies
\begin{equation}
\int_{-1}^{1}\frac{1}{2}\frac{\pi\xi}{\sin\pi\xi}\left|g\left(\xi\right)\right|^{2}d\xi<\infty\label{eq:log6}
\end{equation}
such that 
\begin{equation}
\psi(r,\theta)=\frac{1}{2\pi}\int_{-1}^{1}g(\xi)K_{\xi}(r)e^{i\xi\theta}d\xi.\label{eq:log7}
\end{equation}
Here, $K_{\xi}$ is the modified Bessel function of the second kind.
Consequently, $L$ has deficiency indices $\left(\infty,\infty\right)$.\end{prop}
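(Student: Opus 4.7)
The plan is to use the direct integral decomposition (\ref{eq:log1})--(\ref{eq:log3}) to separate variables, reducing the eigenvalue equation $\Delta\psi=\psi$ on $M$ to a family of radial modified Bessel equations indexed by $\xi\in\mathbb{R}$. Once the radial problem is solved, the global $L^{2}$ condition on $\psi$ is translated via Plancherel into the weighted integrability condition (\ref{eq:log6}), with Lemma~\ref{lem:Kv} supplying the precise weight.

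First, as already observed in the paper, elliptic regularity applied to $(\Delta-I)\psi=0$ forces every $\psi\in\mathscr{D}_{1}(L)$ to lie in $C^{\infty}(M)\cap L^{2}(M)$ and to satisfy the equation classically. Fix such a $\psi$ and form its Fourier transform $\widehat{\psi}(r,\xi)$ in the universal-cover variable $\theta\in\mathbb{R}$ as in the paper. Smoothness of $\psi$ together with the direct-integral formula (\ref{eq:log3}) for $\Delta$ implies that, for almost every $\xi$, the section $\widehat{\psi}(\cdot,\xi)$ is a classical solution of
\[
\frac{1}{r}\frac{d}{dr}\!\left(r\frac{d\widehat{\psi}}{dr}\right)-\frac{\xi^{2}}{r^{2}}\widehat{\psi}=\widehat{\psi},
\]
which is the modified Bessel equation of order $\xi$. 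Its general solution is $a(\xi)I_{\xi}(r)+g(\xi)K_{\xi}(r)$ for some scalar coefficients $a(\xi),g(\xi)$.

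Next, I impose $\widehat{\psi}(\cdot,\xi)\in L^{2}(\mathbb{R}_{+},r\,dr)$ (required for $\psi\in L^{2}(M)$ via Plancherel). The standard asymptotics $I_{\xi}(r)\sim e^{r}/\sqrt{2\pi r}$ as $r\to\infty$ force $a(\xi)=0$ a.e. The small-$r$ asymptotics $K_{\xi}(r)\sim 2^{|\xi|-1}\Gamma(|\xi|)\,r^{-|\xi|}$ for $\xi\neq 0$ (and $K_{0}(r)\sim-\log r$) show that $K_{\xi}\in L^{2}(\mathbb{R}_{+},r\,dr)$ precisely when $|\xi|<1$, so $g$ must be supported in $(-1,1)$. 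Inverse Fourier transform in $\theta$ then yields the representation (\ref{eq:log7}).

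Finally, Plancherel in $\theta$ together with Lemma~\ref{lem:Kv} gives
\[
\|\psi\|_{L^{2}(M)}^{2}=\frac{1}{2\pi}\int_{-1}^{1}|g(\xi)|^{2}\!\int_{0}^{\infty}|K_{\xi}(r)|^{2}\,r\,dr\,d\xi=\frac{1}{2\pi}\int_{-1}^{1}\frac{\pi\xi}{2\sin\pi\xi}|g(\xi)|^{2}\,d\xi,
\]
which is exactly (\ref{eq:log6}). Conversely, any Borel $g$ supported in $(-1,1)$ with (\ref{eq:log6}) defines via (\ref{eq:log7}) a smooth $L^{2}$ solution of $(\Delta-I)\psi=0$, hence an element of $\mathscr{D}_{1}(L)$. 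The map $g\mapsto\psi$ is an isometric bijection onto $\mathscr{D}_{1}(L)$; the weighted $L^{2}$-space of admissible $g$'s on $(-1,1)$ is infinite-dimensional, so $\dim\mathscr{D}_{1}(L)=\infty$, and because $L\le 0$ the deficiency indices are balanced and equal to $(\infty,\infty)$. The main obstacle is technical bookkeeping around the direct integral: justifying that smoothness and $L^{2}$ of $\psi$ let one transfer the eigenvalue equation to fiber-wise ODEs, and invoking the precise Bessel asymptotics to pin down the support $(-1,1)$ for $g$.
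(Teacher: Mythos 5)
Your proposal is correct and takes essentially the same route as the paper: the Fourier/direct-integral decomposition in $\theta$ reduces $\Delta\psi=\psi$ to the fiber-wise modified Bessel equation, whose $L^{2}(\mathbb{R}_{+},r\,dr)$ solutions are scalar multiples of $K_{\xi}$ precisely for $\xi\in(-1,1)$, after which Plancherel and Lemma~\ref{lem:Kv} yield the weighted condition (\ref{eq:log6}) and the representation (\ref{eq:log7}). The only difference is cosmetic: where the paper invokes the essential-selfadjointness criterion for the Bessel operator $l_{\xi}$ (essentially selfadjoint iff $\left|\xi\right|\geq1$, citing \cite{AG93}) to locate the defect vectors, you rederive the same restriction directly from the $I_{\xi}/K_{\xi}$ asymptotics at $r\to\infty$ and $r\to0$ --- a self-contained substitute for the cited fact, at the same level of rigor on the fiber-decomposition bookkeeping that both arguments leave implicit.
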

\begin{proof}
Let $\psi\in C^{\infty}\left(M\right)\cap L^{2}\left(M\right)$. Using
the decomposition (\ref{eq:log1})-(\ref{eq:log2}), we have 
\[
\psi\left(r,\theta\right)=\frac{1}{2\pi}\int_{-\infty}^{\infty}\left(\int_{0}^{\infty}\widehat{\psi}\left(r,\xi\right)e^{i\xi\theta}rdr\right)d\xi.
\]
It follows from (\ref{eq:log3}) that 
\begin{eqnarray*}
\Delta\psi & = & \frac{1}{2\pi}\int_{-\infty}^{\infty}\left(\int_{0}^{\infty}\Delta\left(\widehat{\psi}\left(r,\xi\right)e^{i\xi\theta}\right)rdr\right)d\xi\\
 & = & \frac{1}{2\pi}\int_{-\infty}^{\infty}\left(\int_{0}^{\infty}\left(\frac{1}{r}\frac{d}{dr}\left(r\frac{d}{dr}\right)-\frac{\xi^{2}}{r^{2}}\right)\widehat{\psi}\left(r,\xi\right)e^{i\xi\theta}rdr\right)d\xi.
\end{eqnarray*}
Hence $\Delta\psi=\psi$ iff 
\begin{equation}
\left(\frac{1}{r}\frac{d}{dr}\left(r\frac{d}{dr}\right)-\frac{\xi^{2}}{r^{2}}\right)\widehat{\psi}\left(r,\xi\right)=\widehat{\psi}\left(r,\xi\right);\label{eq:log4}
\end{equation}
where
\[
l_{\xi}:=\frac{1}{r}\frac{d}{dr}\left(r\frac{d}{dr}\right)-\frac{\xi^{2}}{r^{2}}
\]
is the Bessel differential operator of order $\xi$, acting in $L^{2}\left(\mathbb{R}_{+},rdr\right)$.

Note that $l_{\xi}\big|_{C_{c}^{\infty}\left(\mathbb{R}_{+}\right)}\leq0$
(in the order of Hermitian operators); and it is essentially selfadjoint
in $L^{2}\left(\mathbb{R}_{+},rdr\right)$ iff $\left|\xi\right|\geq1$,
see e.g., \cite{AG93}. Thus in the solution to (\ref{eq:log4}),
we must have $\xi\in\left(-1,1\right)$. The corresponding defect
vector is given by 
\[
\widehat{\psi}\left(r,\xi\right)=\mbox{const}\cdot K_{\xi}(r);
\]
where $K_{\xi}$ denotes the modified Bessel function of the second
kind. Since the constant depends on $\xi$, we may write 
\[
\widehat{\psi}\left(r,\xi\right)=g\left(\xi\right)K_{\xi}(r)
\]
subject to the condition 
\begin{equation}
\int_{-1}^{1}\left(\int_{0}^{\infty}\left|g\left(\xi\right)K_{\xi}(r)\right|^{2}rdr\right)d\xi<\infty.\label{eq:log5}
\end{equation}
By Fubini's theorem, 
\begin{eqnarray*}
\mbox{LHS}_{\left(\ref{eq:log5}\right)} & = & \int_{-1}^{1}\left(\int_{0}^{\infty}\left|K_{\xi}(r)\right|^{2}rdr\right)\left|g\left(\xi\right)\right|^{2}d\xi\\
 & = & \int_{-1}^{1}\frac{1}{2}\frac{\pi\xi}{\sin\pi\xi}\left|g\left(\xi\right)\right|^{2}d\xi,\;\mbox{by Lemma }\ref{lem:Kv};
\end{eqnarray*}
which gives (\ref{eq:log6}).

Finally, we have 
\begin{eqnarray*}
\psi(r,\theta) & = & \frac{1}{2\pi}\int_{-\infty}^{\infty}\widehat{\psi}\left(r,\xi\right)e^{i\xi\theta}d\xi\\
 & = & \frac{1}{2\pi}\int_{-1}^{1}g(\xi)K_{\xi}(r)e^{i\xi\theta}d\xi
\end{eqnarray*}
which is the desired result. 
\end{proof}

\begin{rem}
The harmonic analysis of the Riemann surface $M$ of $\log z$ is
of independent interest, but it will involve von Neumann algebras
and non-commutative geometry. As we noted, to study this, we are faced
with two non-commuting unitary one-parameter groups acting on $L^{2}(M)$
(corresponding to the two coordinates for $M$). Of interest here
is the von Neumann algebra generated by these two non-commuting unitary
one parameter groups. It is likely that this von Neumann algebra is
a type III factor. There is a sequence of interesting papers by K.
Schmudgen on dealing with some of this \cite{MR755571,MR774726,MR808690,MR847352,MR829589}.

In any case, the properties of the von Neumann algebra depend on the
defect space (\ref{eq:D1}) for the $M$-Laplacian $L$ . The role
of the non-commutativity is tied in with the operator $L$ as follows:

Nelson's theorem on analytic vectors \cite{Nel59} applies more generally
to Lie algebras of operators, commutative or not. We summarize briefly
its relevance. Let $\mathfrak{g}$ be a finite-dimensional real Lie
algebra of skew symmetric operators with a common dense domain in
a fixed Hilbert space. Pick a basis for $\mathfrak{g}$, and let $L$
be the sum of squares of the basis-elements; the Nelson-Laplacian.
The first theorem in \cite{Nel59} states that $L$ analytically dominates
the Lie algebra $\mathfrak{g}$. The notion of \textquotedblleft analytic
domination\textquotedblright{} is powerful. It means that analytic
vectors for $L$ are also analytic for the whole Lie algebra $\mathfrak{g}$.
As a Corollary: If $L$ is essentially selfadjoint, it has a dense
space of analytic vectors, and so these will also be analytic for
$\mathfrak{g}$, and so $\mathfrak{g}$ is integrable.

It the commutative case, if $\dim\mathfrak{g}=n$, and if $L$ is
essentially selfadjoint, then it follows that the Lie algebra $\mathfrak{g}$
is integrable; i.e., we have a unitary representation $\mathcal{U}$
of $\mathbb{R}^{n}$ such that $d\mathcal{U}=\mathfrak{g}$; see Definition
\ref{def:int}. Hence the operators in $\mathfrak{g}$ are essentially
skew-adjoint, and they strongly commute. The converse implication
holds as well.

Returning to $L^{2}(M)$: Since in our $\log z$ example (Examples
\ref{ex:1D}-\ref{ex:log}), the two unitary one-parameter groups
do not commute, it follows that $\mathfrak{g}$ is not integrable,
and so the Nelson-Laplacian $L$ must have indices $(m,m)$, $m>0$. \end{rem}
\begin{thm}[\cite{Jor76}]
\label{thm:flow3}Let $\mathscr{H}$ be a Hilbert space. Let $\mathfrak{g}\subset\mathscr{L}\left(\mathscr{H}\right)$
be a finite dimensional Lie algebra. Suppose $\mathfrak{g}$ is generated
by a subset $S$ such that every $A\in S$ is closable and the closure
$\overline{A}$ generates a $C_{0}$ group $\left\{ \pi\left(t,A\right)\right\} _{t\in\mathbb{R}}\subset\mathscr{L}\left(\mathscr{H}\right)$.
Then $\mathfrak{g}$ is integrable iff $\mathfrak{g}$ has a dense
locally invariant $\mathscr{D}$ in $\mathscr{H}$.\end{thm}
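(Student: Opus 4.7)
The theorem is a biconditional, and I would prove the two directions separately. The forward implication is essentially formal: if $\mathfrak{g}$ is integrable via a unitary representation $\mathcal{U}$ of the simply connected group $G$ with Lie algebra $\mathfrak{g}$, then a standard G\aa{}rding-type smoothing argument (convolving vectors with $C_{c}^{\infty}(G)$ functions against $\mathcal{U}$) shows that the space $\mathscr{H}_{\infty}$ from \eqref{eq:r5} is dense in $\mathscr{H}$, invariant under every $\mathcal{U}(g)$ and a fortiori under each $d\mathcal{U}(x)$, and so is in particular a dense locally invariant domain.

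The nontrivial direction is the converse. The plan is to build a local representation of $G$ from the one-parameter groups supplied by the hypothesis on $S$, and then globalize using simple connectedness. First, after using the Lie-bracket closure and the local invariance of $\mathscr{D}$ to extend the one-parameter-group property from $S$ to a basis, I may assume I have $U_{j}(t)=\pi(t,x_{j})$ for a basis $\{x_{1},\ldots,x_{n}\}$ of $\mathfrak{g}$. In canonical coordinates of the second kind, every $g$ in a sufficiently small neighborhood $V$ of $e\in G$ has a unique representation $g=\exp(t_{1}x_{1})\cdots\exp(t_{n}x_{n})$, and I define
\[
\Phi(g):=U_{1}(t_{1})U_{2}(t_{2})\cdots U_{n}(t_{n}).
\]
The heart of the proof is to verify that $\Phi$ is a local homomorphism on $V$, i.e., $\Phi(gh)=\Phi(g)\Phi(h)$ whenever $g,h,gh\in V$.

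Once the local homomorphism property is established, the standard monodromy construction lifts $\Phi$ to a unique strongly continuous unitary representation $\mathcal{U}:G\to U(\mathscr{H})$: for any $g\in G$, choose a path $\gamma:[0,1]\to G$ from $e$ to $g$, cover its image by finitely many left translates of $V$, and set $\mathcal{U}(g)$ to be the ordered composition of local values of $\Phi$ along a subdivision of $[0,1]$. Path-independence follows from the local homomorphism property combined with $\pi_{1}(G)=0$, and strong continuity descends from the $C_{0}$-property of the $U_{j}$. By construction $d\mathcal{U}(x_{j})$ extends $\overline{\rho(x_{j})}$, so $\mathfrak{g}$ is integrable in the sense of \ref{def:int}.

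The principal obstacle is establishing the local homomorphism property of $\Phi$. The $U_{j}(t)$ are defined globally on $\mathscr{H}$, but the only handle on how they interact as members of a Lie algebra is the relation $[\rho(x),\rho(y)]=\rho([x,y])$, which holds solely on $\mathscr{D}$. The hypothesis that $\mathscr{D}$ is locally invariant, i.e., stable under each $U_{j}(t)$ for small $t$ rather than merely under $\rho(x_{j})$, is precisely what makes an iterative argument possible: for $v\in\mathscr{D}$ and $w\in\mathscr{H}$, one writes $\langle\Phi(g(s))v,w\rangle$ along a smooth curve $s\mapsto g(s)$ in $V$, differentiates using that all intermediate vectors remain in $\mathscr{D}$ (by local invariance) and therefore in the domain where $\rho$ encodes the Lie structure, matches the derivative against the BCH formula for $G$, and then invokes uniqueness of the resulting first-order operator ODE to conclude equality on $\mathscr{D}$. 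Unitarity of the factors $U_{j}(t)$ then promotes the equality from $\mathscr{D}$ to all of $\mathscr{H}$, which is exactly the statement that $\Phi$ is a local homomorphism.
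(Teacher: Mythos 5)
The paper itself gives no proof of Theorem \ref{thm:flow3}: it is imported verbatim from \cite{Jor76} (with a complete proof in \cite{JM84}), so the comparison is with the cited source. Your architecture --- a local homomorphism $\Phi$ built from ordered products of one-parameter groups, verified on $\mathscr{D}$ by an ODE/uniqueness argument that exploits local invariance, then globalized by monodromy and $\pi_{1}(G)=0$ --- is indeed the strategy of that source. But your opening reduction, ``I may assume I have $U_{j}(t)=\pi(t,x_{j})$ for a basis,'' is a genuine gap that begs the question. The hypothesis supplies $C_{0}$ groups only for elements of the generating set $S$; for bracket elements there is no a priori reason that $\overline{[\rho(x),\rho(y)]}$ generates a $C_{0}$ group --- producing those groups is essentially the content of the theorem, and any attempt to manufacture them by Trotter-type commutator limits $\lim_{n}\left(U_{1}(\tfrac{t}{n})U_{2}(\tfrac{t}{n})U_{1}(-\tfrac{t}{n})U_{2}(-\tfrac{t}{n})\right)^{n^{2}}$ requires exactly the convergence estimates you are trying to sidestep. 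The known proof avoids the basis entirely: since $S$ generates $\mathfrak{g}$, the subgroup generated by $\left\{ \exp(ta):a\in S\right\}$ is an analytic subgroup with Lie algebra $\mathfrak{g}$, hence open, hence all of the connected group $G$, so finite products of $S$-exponentials fill a neighborhood of $e$; the hard point then becomes \emph{well-definedness} of $\Phi$ across different factorizations of the same group element, which is precisely the difficulty your basis reduction hides. (Your ODE engine --- local invariance keeping all intermediate vectors in $\mathscr{D}$, the adjoint-action identity $U_{j}(-t)\rho(x)U_{j}(t)=\rho(\mathrm{Ad}(\exp(-tx_{j}))x)$ on $\mathscr{D}$ via Gronwall, then density --- is the correct one, but as written it presupposes the basis groups.)

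There is a second gap at the end: from ``$d\mathcal{U}(x_{j})$ extends $\overline{\rho(x_{j})}$'' you conclude integrability ``in the sense of Definition \ref{def:int},'' but that inference only yields Definition \ref{def:ext} (containment of graphs), whereas Definition \ref{def:int} demands \emph{equality} of graph closures. The distinction is the paper's central theme: in Example \ref{ex:1D}, $d/dx$ on $C_{c}^{\infty}(0,1)$ is extendable but not integrable. To upgrade containment to equality you must show each $\rho(x)$ has deficiency indices $(0,0)$, and this is where local invariance is used a second time, exactly as in Step 5 of the proof of Theorem \ref{thm:flow1}: the iteration (\ref{eq:il3}) gives $\mathcal{U}(g)\mathscr{D}\subseteq\mathscr{D}$, so a putative deficiency vector $f$ with $\left\langle \rho(x)v-v,f\right\rangle \equiv0$ on $\mathscr{D}$ forces $\left\langle U_{t}v,f\right\rangle =e^{t}\left\langle v,f\right\rangle$, and boundedness of the left-hand side kills $f$. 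A minor further point: the theorem's hypothesis grants only $C_{0}$ groups, while Definitions \ref{def:ext}--\ref{def:int} are unitary; you need the skew-symmetry of $\mathfrak{g}$ on $\mathscr{D}$ (the norm-preservation computation of Step 1 of Theorem \ref{thm:flow1}) to see that the local flows are isometric on $\mathscr{D}$ and close to unitaries --- you assumed this silently. Your forward direction via G\aa{}rding-type smoothing and $\mathscr{H}_{\infty}$ from (\ref{eq:r5}) is fine.
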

\begin{rem}
Theorem \ref{thm:flow3} also applies to abelian Lie algebras. In
Example \ref{ex:log}, we have two derivative operators acting on
$L^{2}\left(M\right)$, where $M$ is the Riemann surface of $\log z$.
The Lie algebra is two dimensional, and not integrable. We conclude
that there is no locally invariant $\mathscr{D}$ for the two operators. 

There is a local representation in $L^{2}(M)$ but not a global one.
By global we mean the closures are strongly commuting, which we do
not have. In our $\log z$ example it is obvious that we can integrate
locally $\varphi_{g}$, $g$ in a small neighborhood of 0 in $\mathbb{R}^{2}$,
so a local representation $\varphi$ of $\mathbb{R}^{2}$ acting on
$L^{2}(M)$, but $\varphi$ will not have any locally invariant $\mathscr{D}$.
This is different from our 1D examples. In Example \ref{ex:1D}, we
had a $(1,1)$ example, but it is contained in a $(0,0)$ example
which has a locally invariant $\mathscr{D}$. No such thing happens
for $L^{2}(M)$ since the two operators are already essentially skew-adjoint.\end{rem}
\begin{cor}
\label{cor:inv}Let $G$ be a simply connected Lie group with Lie
algebra $\mathfrak{g}$, and exponential mapping $\mathfrak{g}\xrightarrow{\exp}G$.
Let $\mathscr{H}$ be a Hilbert space, and $\mathscr{D}\subset\mathscr{H}$
a dense subspace. Let $\rho\in Sk\left(\mathscr{D},\mathscr{H}\right)$
be a representation of $\mathfrak{g}$ with $\mathscr{D}$ as a common
dense domain for the skew-symmetric operators $\left\{ \rho\left(x\right)\:\big|\: x\in\mathfrak{g}\right\} $.
By a \uline{local representation} for $\rho$, we mean a neighborhood
$W$ of $e$ in $G$ and a mapping 
\begin{equation}
\varphi_{W}:W\longrightarrow\mbox{operators on }\mathscr{D}\;(\mbox{generally unbounded})\label{eq:lrep1}
\end{equation}
such that if $g_{1},g_{2}$ and $g_{1}g_{2}$ are in $W$, then the
following two conditions 
\begin{equation}
\varphi_{W}\left(g_{1}g_{2}\right)=\varphi_{W}\left(g_{1}\right)\varphi_{W}\left(g_{2}\right),\;\mbox{and}\label{eq:lrep2}
\end{equation}
\begin{equation}
\frac{d}{dt}\varphi_{W}\left(\exp\left(tx\right)\right)=\rho\left(x\right)\varphi_{W}\left(\exp\left(tx\right)\right)\label{eq:lrep3}
\end{equation}
hold where $x\in\mathfrak{g}$, and $\exp\left(tx\right)\in W$. 

Suppose that there is a system of neighborhoods $\mathcal{W}=\left\{ W\right\} $
of $e$ in $G$, solutions $\varphi_{W}$ to (\ref{eq:lrep1})-(\ref{eq:lrep3}),
and subspaces $\mathscr{D}_{W}\subset\mathscr{D}$ such that the following
two conditions hold: 
\begin{equation}
\mathscr{D}=\bigcup_{W\in\mathcal{W}}\mathscr{D}_{W},\;\mbox{and}\label{eq:lrep4}
\end{equation}
\begin{equation}
\varphi_{W}\left(g\right)\mathscr{D}_{W}\subset\mathscr{D}_{W},\;\forall g\in W.\label{eq:lrep5}
\end{equation}
Then we conclude that $\rho$ is integrable, i.e., there is a unitary
representation $\mathcal{U}$ of $G$, acting on $\mathscr{H}$, such
that 
\[
\mathscr{D}\subseteq\mathscr{H}_{\infty},\;\rho\left(x\right)\subseteq d\mathcal{U}\left(x\right),\;\forall x\in\mathfrak{g};\;\mbox{and}
\]
\[
\left(\mbox{Graph closure of }\rho\left(x\right)\right)=d\mathcal{U}\left(x\right),\;\forall x\in\mathfrak{g}.
\]

\end{cor}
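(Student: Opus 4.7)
The plan is to reduce the corollary to the previously stated results by extracting one-parameter local flows from the group-valued local representation $\varphi_{W}$, and then invoking Theorem \ref{thm:flow3} once the hypotheses on the generators have been verified. Concretely, fix a basis $\{x_{1},\ldots,x_{n}\}$ of $\mathfrak{g}$. For each $j$ and each neighborhood $W\in\mathcal{W}$, pick $\varepsilon>0$ so that $\exp(tx_{j})\in W$ for $|t|<\varepsilon$, and set $\varphi_{\varepsilon,j}(t):=\varphi_{W}(\exp tx_{j})$. Then (\ref{eq:lrep2}) gives the local one-parameter group law, (\ref{eq:lrep3}) gives $\frac{d}{dt}\varphi_{\varepsilon,j}(t)=\rho(x_{j})\varphi_{\varepsilon,j}(t)$, and (\ref{eq:lrep5}) gives the invariance $\varphi_{\varepsilon,j}(t)\mathscr{D}_{W}\subset\mathscr{D}_{W}$. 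Together with (\ref{eq:lrep4}), these are precisely the hypotheses \ref{enu:i1-1}--\ref{enu:i1-2} of Theorem \ref{thm:flow1} applied to $H=\rho(x_{j})$, so Step 1 is to conclude that each $\rho(x_{j})$ is essentially skew-adjoint with closure $\overline{\rho(x_{j})}$ generating a strongly continuous unitary one-parameter group $U_{j}(t)=e^{t\overline{\rho(x_{j})}}$ on $\mathscr{H}$.

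Step 2 is to feed this into Theorem \ref{thm:flow3}. The subset $S=\{\rho(x_{1}),\ldots,\rho(x_{n})\}$ generates $\mathfrak{g}$ as a Lie algebra (via the commutation relations $\rho([x_{i},x_{j}])=[\rho(x_{i}),\rho(x_{j})]$ on $\mathscr{D}$), and by Step 1 each element of $S$ is closable with closure generating a $C_{0}$-group. Moreover, the family $\{\mathscr{D}_{W}\}_{W\in\mathcal{W}}$, together with the local homomorphism property (\ref{eq:lrep2}) and the invariance (\ref{eq:lrep5}), witnesses precisely that $\mathscr{D}$ is a dense \emph{locally invariant} domain for $\mathfrak{g}$ in the sense required by Theorem \ref{thm:flow3}. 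Invoking that theorem yields a unitary representation $\mathcal{U}$ of the simply connected Lie group $G$ whose differential extends $\rho$.

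Step 3 upgrades this to the conclusion asserted in the corollary. By the lemma on $C^{\infty}$-vectors recorded above, $d\mathcal{U}(x)\big|_{\mathscr{H}_{\infty}}$ is essentially skew-adjoint for every $x\in\mathfrak{g}$. For each $v\in\mathscr{D}_{W}$ and $x\in\mathfrak{g}$, the map $t\mapsto\varphi_{W}(\exp tx)v=\mathcal{U}(\exp tx)v$ is smooth in a neighborhood of $0$ thanks to (\ref{eq:lrep3}), and iterating using (\ref{eq:lrep2}) on higher-order derivatives (as in Step 2 of the proof of Theorem \ref{thm:flow1}) shows that $v$ generates a $C^{\infty}$ orbit under all one-parameter subgroups and hence, by standard arguments on $C^{\infty}$-vectors for Lie group representations, $v\in\mathscr{H}_{\infty}$. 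Thus $\mathscr{D}\subseteq\mathscr{H}_{\infty}$, and $\rho(x)\subseteq d\mathcal{U}(x)$ for all $x$. Finally, since $\rho(x_{j})$ has deficiency indices $(0,0)$ by Step 1, the closure $\overline{\rho(x_{j})}$ agrees with the skew-adjoint extension $d\mathcal{U}(x_{j})$, and a short argument using linearity of $\rho$ and $d\mathcal{U}$ on $\mathscr{D}$, together with Theorem \ref{thm:flow1} applied to an arbitrary $x=\sum c_{j}x_{j}$ (whose local flow is again $\varphi_{W}(\exp tx)$), gives the graph closure identity for all $x\in\mathfrak{g}$.

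The main obstacle I anticipate is not Step 1, which is almost a direct citation of Theorem \ref{thm:flow1}, but verifying that the local representation data $(W,\varphi_{W},\mathscr{D}_{W})$ in the corollary legitimately serves as the locally invariant domain required by Theorem \ref{thm:flow3}. The subtle point is that Theorem \ref{thm:flow3} is usually phrased in terms of invariance under the local flows of individual generators, whereas here invariance is given under the full local $G$-action; one must check that these two notions coincide in the present setting, which follows from the identity $\varphi_{W}(\exp tx)=$ local flow of $\rho(x)$ on $\mathscr{D}_{W}$ for $|t|$ small, established in Step 1.
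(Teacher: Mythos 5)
Your proposal is correct and follows the route the paper itself intends: the paper gives no separate proof of Corollary \ref{cor:inv}, presenting it as a direct application of Theorem \ref{thm:flow3} (from \cite{Jor76}), with Theorem \ref{thm:flow1} supplying, generator by generator, the essential skew-adjointness of $\rho\left(x\right)$ and the $C_{0}$-group hypothesis that Theorem \ref{thm:flow3} requires. Your Steps 1--3 --- restricting $\varphi_{W}$ to one-parameter subgroups, identifying $\left\{ \mathscr{D}_{W}\right\}$ as the locally invariant domain, and the smooth-vector/graph-closure upgrade (where your use of skew-symmetry to force agreement of the local flows with $\mathcal{U}\left(\exp tx\right)$ is exactly the right uniqueness argument) --- fill in precisely the details the paper leaves implicit.
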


\section*{Appendix. Riemann surfaces of finite-cover degree}

Fix $N\in\mathbb{N}$, and let $M$ be the $N$-covering surface of
$\mathbb{R}^{2}\backslash\left\{ \left(0,0\right)\right\} $. Under
polar coordinates, $M$ is covered in a single coordinate patch as
\begin{align*}
x & =r\cos\theta\\
y & =r\sin\theta
\end{align*}
where $r\in\mathbb{R}_{+}$, and $\theta\in[0,2\pi N)$; and it has
the induced metric
\[
ds^{2}=dr^{2}+r^{2}d\theta^{2}
\]
with volume form 
\[
dV=rdrd\theta.
\]

Using Fourier series in the $\theta$ variable, we have the following
decomposition 
\begin{equation}
L^{2}\left(M\right)=\sum_{k\in\mathbb{Z}}^{\oplus}\left(L^{2}(\mathbb{R}_{+},rdr)\otimes\mbox{span}\{e^{i\theta k/N}\}\right).\label{eq:FL1}
\end{equation}
See, for example, \cite[Chap. 4]{SW71}. Hence, for all $f\in L^{2}\left(M\right)$,
we set 
\begin{eqnarray*}
\widehat{f}_{k}\left(r\right) & = & \frac{1}{2\pi N}\int_{0}^{2\pi N}f\left(r,\theta\right)e^{-i\theta k/N}d\theta;\;\mbox{then}\\
f\left(r,\theta\right) & = & \sum\widehat{f}_{k}\left(r\right)e^{i\theta k/N}
\end{eqnarray*}
and 
\[
\left\Vert f\right\Vert _{L^{2}\left(M\right)}^{2}=\sum_{k=-\infty}^{\infty}\int_{0}^{\infty}\left|\widehat{f}_{k}\left(r\right)\right|^{2}rdr.
\]

The formal 2D Laplacian in polar coordinates takes the form 
\begin{equation}
\Delta=\sum_{k\in\mathbb{Z}}^{\oplus}\left(\frac{1}{r}\frac{d}{dr}\left(r\frac{d}{dr}\right)-\frac{(k/N)^{2}}{r^{2}}\right)\otimes1\label{eq:FL2}
\end{equation}
Set $W:L^{2}(\mathbb{R}_{+},rdr)\rightarrow L^{2}(\mathbb{R}_{+},dr)$
by 
\begin{equation}
Wf(r):=r^{1/2}f(r).\label{eq:W-1}
\end{equation}
$W$ is unitary and it converts (\ref{eq:FL2}) into 
\begin{align}
W\Delta W^{*} & =\sum_{k\in\mathbb{Z}}^{\oplus}\left(l_{k/N}\otimes1\right),\;\mbox{where}\label{eq:Lap2-1}\\
l_{k/N} & :=\frac{d^{2}}{dr^{2}}-\frac{(k/N)^{2}-1/4}{r^{2}}\label{eq:Lap3-1}
\end{align}
(It is understood that $W$ acts on the radial part of the decomposition
(\ref{eq:FL1}).)

Note that $l_{\nu}$ in (\ref{eq:Lap3-1}) is the Bessel differential
operator of order $\nu$ acting on $L^{2}(\mathbb{R}_{+},dr)$, where
$dr$ denotes the Lebesgue measure. It is known that $l_{\nu}\big|_{C_{c}^{\infty}\left(\mathbb{R}_{+}\right)}$
is essentially selfadjoint iff $\left|\nu\right|\geq1$. See, for
example, \cite{AG93}. 
\begin{prop}
Let $M$ be the $N$-covering surface of $\mathbb{R}^{2}\backslash\left\{ \left(0,0\right)\right\} $,
$N<\infty$. Let $L$ be the Nelson-Laplace operator in (\ref{eq:L1}),
and $\mathscr{D}_{1}$ be the deficiency space in (\ref{eq:D1}).
Then $\mathscr{D}_{1}$ is the linear span of the following functions:
\[
K_{k/N}(r)e^{\pm i\theta(k/N)}
\]
where $k=0,\ldots,N-1$; and $K_{\nu}$ denotes the modified Bessel
function of the second kind of order $\nu$. In particular, $L$ has
deficiency indices $\left(2N-1,2N-1\right)$.\end{prop}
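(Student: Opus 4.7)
The plan is to reduce the problem to the analysis of Bessel operators by diagonalizing the Laplacian via the Fourier series decomposition already set up in the appendix. Since $\theta \in [0, 2\pi N)$ on the $N$-fold cover, the relevant characters are $e^{i\theta k/N}$, $k \in \mathbb{Z}$, which yields the direct sum decomposition (\ref{eq:FL1}) and the corresponding diagonalization (\ref{eq:FL2}) of $\Delta$. After applying the unitary $W$ from (\ref{eq:W-1}), which strips off the $r^{1/2}$ factor, each sector becomes the scalar Bessel operator $l_{k/N}$ of (\ref{eq:Lap3-1}) acting on $L^2(\mathbb{R}_+, dr)$. Thus the deficiency space $\mathscr{D}_1$ of $L$ decomposes as an orthogonal direct sum of the deficiency spaces for $l_{k/N}$ over $k \in \mathbb{Z}$.

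Next I invoke the classical criterion (cited in the appendix, e.g.\ \cite{AG93}) that $l_\nu |_{C_c^\infty(\mathbb{R}_+)}$ is essentially selfadjoint in $L^2(\mathbb{R}_+, dr)$ if and only if $|\nu| \geq 1$. Applied with $\nu = k/N$, this forces the defect to vanish whenever $|k| \geq N$, so only the finitely many indices $k \in \{-(N-1), \ldots, N-1\}$ contribute. For each such $k$, the defect equation $l_{k/N} \varphi = \varphi$ is a second-order ODE whose two linearly independent solutions are the modified Bessel functions $I_{k/N}(r)$ and $K_{k/N}(r)$; the former blows up at infinity while the latter decays exponentially, and its square integrability against $r\,dr$ on $\mathbb{R}_+$ (equivalently, against $dr$ after applying $W$) is quantified precisely by Lemma \ref{lem:Kv}, which gives the finite value $\tfrac{1}{2}\pi(k/N)/\sin(\pi k/N)$ (interpreted by continuity at $k=0$). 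Therefore each sector contributes a one-dimensional defect space spanned by $K_{k/N}(r) e^{i\theta k/N}$.

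Collecting the contributions and undoing the unitary $W$, I obtain that $\mathscr{D}_1$ is spanned by the functions $K_{k/N}(r) e^{i\theta k/N}$ for $k = -(N-1), \ldots, N-1$. Using $K_{-\nu} = K_\nu$, this is equivalent to the list $K_{k/N}(r) e^{\pm i\theta(k/N)}$ for $k = 0, 1, \ldots, N-1$, where for $k = 0$ the $\pm$ variants coincide. The count is therefore $2(N-1) + 1 = 2N - 1$, yielding deficiency indices $(2N-1, 2N-1)$; the equality of the two indices can also be seen \emph{a priori} from $L \leq 0$, as noted in Proposition \ref{prop:logz}.

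The main obstacle, as in the infinite-cover case, is justifying that the deficiency space of the closure of $L = \Delta|_{C_c^\infty(M)}$ decomposes cleanly with respect to the Fourier series (\ref{eq:FL1}); this requires showing that $C_c^\infty(M)$ is compatible with the decomposition (e.g.\ by approximating the sector-wise cutoffs in $\theta$ by smooth functions of compact support in $M$) and invoking elliptic regularity to place every element of $\mathscr{D}_1$ in $C^\infty(M) \cap L^2(M)$, so that the ODE reduction is legitimate. Once this is in place, the remaining steps are essentially the finite-$N$ analog of the infinite-cover argument already carried out in the body of the paper, with the integral over $\xi \in (-1,1)$ replaced by a finite sum over $k/N \in (-1,1)$.
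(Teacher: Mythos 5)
Your proposal is correct and follows essentially the same route as the paper's own proof: Fourier series in $\theta$ via (\ref{eq:FL1}), reduction in each sector to the Bessel defect equation, the essential selfadjointness criterion $\left|\nu\right|\geq1$ from \cite{AG93} to restrict to $\left|k/N\right|<1$, identification of the defect vectors with $K_{k/N}(r)e^{ik\theta/N}$, and the count $2(N-1)+1=2N-1$ using $K_{-\nu}=K_{\nu}$. Your additional remarks---ruling out $I_{\nu}$ by growth, quantifying square-integrability via Lemma \ref{lem:Kv}, and flagging the compatibility of the deficiency space with the decomposition together with elliptic regularity---only make explicit points the paper treats implicitly.
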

\begin{proof}
Let $\psi\in\mathscr{D}_{1}$, i.e., $\psi$ is the solution to the
following equation:
\[
\Delta\psi=\psi,\;\psi\in C^{\infty}\left(M\right)\cap L^{2}\left(M\right);
\]
see (\ref{eq:D1}). Using (\ref{eq:FL1}), we may write 
\begin{eqnarray}
\psi\left(r,\theta\right) & = & \sum_{k\in\mathbb{Z}}\widehat{\psi}_{k/N}\left(r\right)e^{ik\theta/N},\;\mbox{where}\label{eq:LM1-1}\\
\widehat{\psi}_{k/N}\left(r\right) & := & \frac{1}{2\pi N}\int_{0}^{2\pi N}\psi\left(r,\theta\right)e^{-ik\theta/N}d\theta,\label{eq:LM2-1}
\end{eqnarray}
so that 
\begin{eqnarray*}
\Delta\psi & = & \sum_{k\in\mathbb{Z}}\Delta\left(\sum_{k\in\mathbb{Z}}\widehat{\psi}_{k/N}\left(r\right)e^{ik\theta/N}\right)\\
 & = & \sum_{k\in\mathbb{Z}}\left(\left(\frac{1}{r}\frac{d}{dr}\left(r\frac{d}{dr}\right)-\frac{(k/N)^{2}}{r^{2}}\right)\widehat{\psi}_{k/N}\left(r\right)\right)e^{ik\theta/N}.
\end{eqnarray*}
It follows that $\Delta\psi=\psi$ iff 
\begin{equation}
\left(\frac{1}{r}\frac{d}{dr}\left(r\frac{d}{dr}\right)-\frac{(k/N)^{2}}{r^{2}}\right)\widehat{\psi}_{k/N}\left(r\right)=\widehat{\psi}_{k/N}\left(r\right).\label{eq:dev}
\end{equation}
By \cite{AG93} and the discussion above, the only solution to (\ref{eq:dev})
in $L^{2}\left(\mathbb{R}_{+},rdr\right)$ is a scalar multiple of
$K_{k/N}(r)$, for $\left|k/N\right|<1$, i.e., 
\[
\widehat{\psi}_{k/N}\left(r\right)=K_{k/N}(r),\; k=0,\ldots,N-1.
\]
Therefore, by (\ref{eq:LM1-1}), we have 
\[
\psi\left(r,\theta\right)=\sum_{k\in\mathbb{Z}}\widehat{\psi}_{k/N}\left(r\right)e^{ik\theta/N}=\sum_{k\in\mathbb{Z}}K_{k/N}(r)e^{ik\theta/N}
\]
for $k=0,\ldots,N-1$; which is the assertion.\end{proof}
\begin{acknowledgement*}
The co-authors thank the following colleagues for helpful and enlightening
discussions: Professors Daniel Alpay, Sergii Bezuglyi, Paul Muhly,
Myung-Sin Song, Wayne Polyzou, Gestur Olafsson, Keri Kornelson, and
members in the Math Physics seminar at the University of Iowa.
\end{acknowledgement*}
\bibliographystyle{amsalpha}
\bibliography{number9}

\providecommand{\bysame}{\leavevmode\hbox to3em{\hrulefill}\thinspace}
\providecommand{\MR}{\relax\ifhmode\unskip\space\fi MR }
\providecommand{\MRhref}[2]{%
  \href{http://www.ams.org/mathscinet-getitem?mr=#1}{#2}
}
\providecommand{\href}[2]{#2}
\begin{thebibliography}{BGJR88}

\bibitem[AG93]{AG93}
N.~I. Akhiezer and I.~M. Glazman, \emph{Theory of linear operators in {H}ilbert
  space}, Dover Publications Inc., New York, 1993, Translated from the Russian
  and with a preface by Merlynd Nestell, Reprint of the 1961 and 1963
  translations, Two volumes bound as one. \MR{1255973 (94i:47001)}

\bibitem[AJSV13]{MR3017129}
Daniel Alpay, Palle Jorgensen, Ron Seager, and Dan Volok, \emph{On discrete
  analytic functions: products, rational functions and reproducing kernels}, J.
  Appl. Math. Comput. \textbf{41} (2013), no.~1-2, 393--426. \MR{3017129}

\bibitem[Arn78]{Arn78}
D.~Arnal, \emph{Analytic vectors and irreducible representations of nilpotent
  {L}ie groups and algebras}, Lett. Math. Phys. \textbf{2} (1977/78), no.~6,
  429--434. \MR{513110 (80i:22024)}

\bibitem[Ben02]{Ben02}
Djilali Benayat, \emph{Local representations of {L}ie groups}, Far East J.
  Math. Sci. (FJMS) \textbf{4} (2002), no.~3, 389--403. \MR{1902945
  (2003i:20093)}

\bibitem[BGJR88]{BGJR88}
Ola Bratteli, Frederick~M. Goodman, Palle E.~T. Jorgensen, and Derek~W.
  Robinson, \emph{The heat semigroup and integrability of {L}ie algebras}, J.
  Funct. Anal. \textbf{79} (1988), no.~2, 351--397. \MR{953908 (90a:47105)}

\bibitem[Che72]{Che72}
Paul~R. Chernoff, \emph{Some remarks on quasi-analytic vectors}, Trans. Amer.
  Math. Soc. \textbf{167} (1972), 105--113. \MR{0295125 (45 \#4193)}

\bibitem[CJ12]{MR2989487}
Ilwoo Cho and Palle E.~T. Jorgensen, \emph{Operators induced by graphs}, Lett.
  Math. Phys. \textbf{102} (2012), no.~3, 323--369. \MR{2989487}

\bibitem[dG83]{dG83}
J.~de~Graaf, \emph{A theory of generalized functions based on holomorphic
  semigroups. {I}. {I}ntroduction and survey}, Nederl. Akad. Wetensch. Indag.
  Math. \textbf{45} (1983), no.~4, 407--420. \MR{731824 (85i:46051)}

\bibitem[dG84]{dG84}
\bysame, \emph{A theory of generalized functions based on holomorphic
  semigroups. {III}. {L}inear mappings, tensor products and kernel theorems},
  Nederl. Akad. Wetensch. Indag. Math. \textbf{46} (1984), no.~2, 173--187.
  \MR{749530 (86b:46060b)}

\bibitem[DS88]{MR1009163}
Nelson Dunford and Jacob~T. Schwartz, \emph{Linear operators. {P}art {II}},
  Wiley Classics Library, John Wiley \& Sons, Inc., New York, 1988, Spectral
  theory. Selfadjoint operators in Hilbert space, With the assistance of
  William G. Bade and Robert G. Bartle, Reprint of the 1963 original, A
  Wiley-Interscience Publication. \MR{1009163 (90g:47001b)}

\bibitem[Fr{\"o}80]{Fro80}
J.~Fr{\"o}hlich, \emph{Unbounded, symmetric semigroups on a separable {H}ilbert
  space are essentially selfadjoint}, Adv. in Appl. Math. \textbf{1} (1980),
  no.~3, 237--256. \MR{603131 (82c:47044)}

\bibitem[FS66]{FS66}
M.~Flato and D.~Sternheimer, \emph{Local representations and mass spectrum},
  Phys. Rev. Lett. \textbf{16} (1966), 1185--1186. \MR{0226913 (37 \#2499)}

\bibitem[GKS11]{GKS11}
Heiko Gimperlein, Bernhard Kr{\"o}tz, and Henrik Schlichtkrull, \emph{Analytic
  representation theory of {L}ie groups: general theory and analytic
  globalizations of {H}arish-{C}handra modules}, Compos. Math. \textbf{147}
  (2011), no.~5, 1581--1607. \MR{2834734 (2012i:22017)}

\bibitem[GT12]{GT12}
Rahul Garg and Sundaram Thangavelu, \emph{On the structure of analytic vectors
  for the {S}chr\"odinger representation}, Monatsh. Math. \textbf{167} (2012),
  no.~1, 61--80. \MR{2942964}

\bibitem[JM84]{JM84}
Palle E.~T. Jorgensen and Robert~T. Moore, \emph{Operator commutation
  relations}, Mathematics and its Applications, D. Reidel Publishing Co.,
  Dordrecht, 1984, Commutation relations for operators, semigroups, and
  resolvents with applications to mathematical physics and representations of
  Lie groups. \MR{746138 (86i:22006)}

\bibitem[J{\o}r76]{Jor76}
Palle~T. J{\o}rgensen, \emph{Perturbation and analytic continuation of group
  representations}, Bull. Amer. Math. Soc. \textbf{82} (1976), no.~6, 921--924.
  \MR{0476922 (57 \#16473)}

\bibitem[Jor86]{Jor86}
Palle E.~T. Jorgensen, \emph{Analytic continuation of local representations of
  {L}ie groups}, Pacific J. Math. \textbf{125} (1986), no.~2, 397--408.
  \MR{863534 (88m:22030)}

\bibitem[Jor87]{Jor87}
\bysame, \emph{Analytic continuation of local representations of symmetric
  spaces}, J. Funct. Anal. \textbf{70} (1987), no.~2, 304--322. \MR{874059
  (88d:22021)}

\bibitem[JPT13]{MR3129890}
Palle E.~T. Jorgensen, Steen Pedersen, and Feng Tian, \emph{Momentum operators
  in two intervals: spectra and phase transition}, Complex Anal. Oper. Theory
  \textbf{7} (2013), no.~6, 1735--1773. \MR{3129890}

\bibitem[JPT14]{MR3167762}
Palle Jorgensen, Steen Pedersen, and Feng Tian, \emph{Restrictions and
  extensions of semibounded operators}, Complex Anal. Oper. Theory \textbf{8}
  (2014), no.~3, 591--663. \MR{3167762}

\bibitem[LP89]{MR1037774}
Peter~D. Lax and Ralph~S. Phillips, \emph{Scattering theory}, second ed., Pure
  and Applied Mathematics, vol.~26, Academic Press, Inc., Boston, MA, 1989,
  With appendices by Cathleen S. Morawetz and Georg Schmidt. \MR{1037774
  (90k:35005)}

\bibitem[Nee06]{Nee06}
Karl-Hermann Neeb, \emph{Towards a {L}ie theory of locally convex groups}, Jpn.
  J. Math. \textbf{1} (2006), no.~2, 291--468. \MR{2261066 (2007k:22020)}

\bibitem[Nee11]{Nee11}
Karl-H. Neeb, \emph{On analytic vectors for unitary representations of infinite
  dimensional {L}ie groups}, Ann. Inst. Fourier (Grenoble) \textbf{61} (2011),
  no.~5, 1839--1874 (2012). \MR{2961842}

\bibitem[Nel59]{Nel59}
Edward Nelson, \emph{Analytic vectors}, Ann. of Math. (2) \textbf{70} (1959),
  572--615. \MR{0107176 (21 \#5901)}

\bibitem[Pra91]{Pra91}
Humberto~E. Prado, \emph{A geometric construction of local representations of
  local {L}ie groups}, Acta Appl. Math. \textbf{25} (1991), no.~1, 87--98.
  \MR{1140759 (92j:22014)}

\bibitem[Rob89]{Rob89}
Derek~W. Robinson, \emph{Lipschitz operators}, J. Funct. Anal. \textbf{85}
  (1989), no.~1, 179--211. \MR{1005861 (90j:22011)}

\bibitem[Rob90]{Rob90}
\bysame, \emph{The heat semigroup and integrability of {L}ie algebras:
  {L}ipschitz spaces and smoothness properties}, Comm. Math. Phys. \textbf{132}
  (1990), no.~1, 217--243. \MR{1069210 (91i:17012)}

\bibitem[Rus87]{Rus87}
Jan Rusinek, \emph{Analytic vectors and integrability of {L}ie algebra
  representations}, J. Funct. Anal. \textbf{74} (1987), no.~1, 10--23.
  \MR{901228 (88i:22041)}

\bibitem[Sch84]{MR755571}
Konrad Schm{\"u}dgen, \emph{On commuting unbounded selfadjoint operators. {I}},
  Acta Sci. Math. (Szeged) \textbf{47} (1984), no.~1-2, 131--146. \MR{755571
  (86b:47045)}

\bibitem[Sch85]{MR808690}
\bysame, \emph{On commuting unbounded selfadjoint operators. {III}},
  Manuscripta Math. \textbf{54} (1985), no.~1-2, 221--247. \MR{808690
  (87h:47061)}

\bibitem[Sch86a]{MR829589}
\bysame, \emph{A note on commuting unbounded selfadjoint operators affiliated
  to properly infinite von {N}eumann algebras. {II}}, Bull. London Math. Soc.
  \textbf{18} (1986), no.~3, 287--292. \MR{829589 (87g:47079)}

\bibitem[Sch86b]{MR847352}
\bysame, \emph{On commuting unbounded selfadjoint operators. {IV}}, Math.
  Nachr. \textbf{125} (1986), 83--102. \MR{847352 (88j:47026)}

\bibitem[Seg64]{Seg64}
I.~E. Segal, \emph{Infinite-dimensional irreducible representations of compact
  semi-simple groups}, Bull. Amer. Math. Soc. \textbf{70} (1964), 155--160.
  \MR{0158028 (28 \#1256)}

\bibitem[SF84]{MR774726}
K.~Schm{\"u}dgen and J.~Friedrich, \emph{On commuting unbounded selfadjoint
  operators. {II}}, Integral Equations Operator Theory \textbf{7} (1984),
  no.~6, 815--867. \MR{774726 (86i:47032)}

\bibitem[SW71]{SW71}
Elias~M. Stein and Guido Weiss, \emph{Introduction to {F}ourier analysis on
  {E}uclidean spaces}, Princeton University Press, Princeton, N.J., 1971,
  Princeton Mathematical Series, No. 32. \MR{0304972 (46 \#4102)}

\bibitem[Tia11]{Tia11}
Feng Tian, \emph{{On commutativity of unbounded operators in Hilbert space}},
  Ph.D. thesis, University of Iowa, May 2011.

\bibitem[vN30]{vN30}
J.~v.~Neumann, \emph{Allgemeine {E}igenwerttheorie {H}ermitescher
  {F}unktionaloperatoren}, Math. Ann. \textbf{102} (1930), no.~1, 49--131.
  \MR{1512569}

\bibitem[Voh72]{Voh72}
U.~C. Vohra, \emph{On almost analytic vectors in a strict sense}, C. R. Acad.
  Bulgare Sci. \textbf{25} (1972), 889--891. \MR{0319105 (47 \#7651)}

\bibitem[Wat44]{Wat44}
G.~N. Watson, \emph{A {T}reatise on the {T}heory of {B}essel {F}unctions},
  Cambridge University Press, Cambridge, England, 1944. \MR{MR0010746 (6,64a)}

\end{thebibliography}

\end{document}